\patchcmd{\thebibliography}{\chapter*}{\section*}{}{}
\date{\today}
\author{\texorpdfstring{Jens Grygierek\footnotemark[1]}{Jens Grygierek}}
\title{Multivariate Normal Approximation for functionals of random polytopes} 
\begin{document}
\renewcommand{\thefootnote}{\fnsymbol{footnote}}

\maketitle	

\footnotetext[1]{Institute of Mathematics, Osnabr\"uck University, Germany. Email: jens.grygierek@uni-osnabrueck.de}
	
\begin{abstract}
	\noindent Consider the random polytope, that is given by the convex hull of a Poisson point process on a smooth convex body in $\RR^d$.
	We prove central limit theorems for continuous motion invariant valuations including the Will's functional and the intrinsic volumes of this random polytope. 
	Additionally we derive a central limit theorem for the oracle estimator, that is an unbiased an minimal variance estimator for the volume of a convex set.
	Finally we obtain a multivariate limit theorem for the intrinsic volumes and the components of the $\bff$-vector of the random polytope.
	\bigskip
	\\
	\textbf{Keywords}. central limit theorem, multivariate limit theorem, intrinsic volumes, f-vector, random polytope, random convex hull, stochastic geometry, Poisson point process, oracle estimator, volume estimation\\
	\textbf{MSC (2010)}. 52A22, 60D05, 60F05
\end{abstract}

\section{Introduction}

We denote by $\cK^d$, $d \geq 2$, the collection of all compact convex sets in the Euclidean space $\RR^d$ and by $\cK^d_{sm}$ the set of all smooth convex bodies in $\cK^d$, which are all $K \in \cK^d$ that have nonempty interior, boundary of differentiability class $\cC^2$, and positive Gaussian curvature everywhere.
Let $\eta_t$ be a Poisson point process on $\RR^d$ with intensity measure $\mu = t \Lambda_d$, where $t > 0$ and $\Lambda_d$ denotes the $d$-dimensional Lebesgue measure on $\RR^d$, see e.g. \cite{LastPenrose2018} for more details on Poisson point processes.
We fix $K \in \cK^d_{sm}$ and investigate the random polytope $K_t \subset K$ defined as the convex hull of all points in $\eta_t \cap K$, 
\begin{align*}
	K_t := \conv\cbras*{x : x \in \eta_t \cap K}.
\end{align*}

The investigation of random convex hulls is one of the classical problems in stochastic geometry, see for instance the survey article \cite{Hug2013} and the introduction to stochastic geometry \cite{HugReitzner2016}.
Functionals like the intrinsic volumes $V_j(K_t)$ and the components $f_j(K_t)$ of the $\bff$-vector, see Section \ref{sec:geometric-prelim}, of the random polytope $K_t$ have been studied prominently, see \cites{CalkaYukich2014,Reitzner2010}[Section 1]{CalkaSchreiberYukich2013} and the references therein as well as the remarks and references on \cite[Theorem 5.5]{LachiezeReySchulteYukich2017} for more details.

Central limit theorems for $V_j(K_t)$ were proven in the special case that $K$ is the $d$-dimensional Euclidean unit ball, see \cite{CalkaSchreiberYukich2013} and \cite{Schreiber2002}. Short proofs for the binomial case $K_n$, where $n$ i.i.d. uniformly distributed points in $K$ are considered instead of a Poisson point process, were derived in \cite{ThaeleTurchiWespi2018}. 
Recently \cite{LachiezeReySchulteYukich2017} embedded the problem for both cases, the binomial and the Poisson case, in the theory of stabilizing functionals deriving central limit theorems for smooth convex bodies removing the logarithmic factors in the error of approximation improving the rate of convergence.

We extend the results of \cite{ThaeleTurchiWespi2018} on the intrinsic volumes to the more general case of continuous and motion invariant valuations. This includes the total intrinsic volume functional (Wills functional) $W(K_t)$ and a central limit theorem for the intrinsic volumes in the Poisson case similar to \cite[Theorem 1.1]{ThaeleTurchiWespi2018}.

We also obtain a univariate central limit theorem for the oracle estimator $\hat{\vartheta}_{oracle}$, that was derived in the remarkable work of \cite{BaldinReiss2016} on the estimation of the volume of a convex body.

Finally we investigate the components of the $\bff$-vector $f_j(K_t)$, $j \in \cbras*{0,\ldots,d-1}$ defined as the number of $j$-dimensional faces of $K_t$ and derive a multivariate limit theorem on the random vector composed of the intrinsic volumes and the $\bff$-vector.

\bigskip

For a continuous and motion invariant valuation $\varphi: \cK^d \rightarrow \RR$ we define the random variable $\varphi(K_t)$ and the corresponding standardization 
\begin{align}
	\tilde{\varphi}(K_t) := \frac{\varphi(K_t)-\E*{\varphi(K_t)}}{\sqrt{\V*{\varphi(K_t)}}}.
\end{align}

We prove a central limit theorem for $t \rightarrow \infty$ under some additional assumptions on the coefficients $c_i$ in the linear decomposition into the intrinsic volumes given by the remarkable theorem of Hadwiger stated here as Theorem \ref{thm:hadwiger}, see \cite{Chen2004, Klain1995, Hadwiger1957} for more details and different proofs.
Given this restriction of our model to valuation functionals $\varphi(K_t)$, that can be safely assumed to not loose variance compared to the intrinsic volume functionals $V_j(K_t)$, we show the first main result of this paper:

We denote by $\dist_W$ the Wasserstein distance, see \eqref{eq:metric-wasserstein} in Section \ref{sec:malliavin-stein:uni} for the precise Definition.

\begin{theorem}[Univariate Limit Theorem]\label{thm:univariate}
	Assume that $\varphi$ is a continuous and motion invariant valuation, with linear decomposition given by Hadwiger \eqref{eq:hadwiger}, 
	such that $c_i c_j \geq 0$ for all $i,j \in \cbras*{0, \ldots, d}$ and suppose that at least one index $k \in \cbras*{1, \ldots, d}$ exists, such that $c_k \neq 0$.
	Then
	\begin{align}
		\dist_W(\tilde{\varphi}(K_t), Z) = \cO\rbras*{t^{-\frac{1}{2} + \frac{1}{d+1}} \log(t)^{3 + \frac{2}{d+1}}},
	\end{align}
	where $Z \distribute \cN(0,1)$.
\end{theorem}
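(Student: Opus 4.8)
The plan is to apply the Malliavin--Stein method for Poisson functionals to $F_t := \varphi(K_t) - \E*{\varphi(K_t)}$ and to combine the resulting bound with a lower bound on $\V*{\varphi(K_t)}$ that is made available precisely by the sign condition $c_i c_j \ge 0$. By Hadwiger's Theorem~\ref{thm:hadwiger}, with the decomposition \eqref{eq:hadwiger}, we have $\varphi(K_t) = \sum_{i=0}^{d} c_i V_i(K_t)$; the term $V_0(K_t)$ is the Euler characteristic, which equals $1$ on $\{K_t \neq \emptyset\}$, so it contributes no fluctuation, and since the event $\{K_t = \emptyset\}$ has probability $e^{-t\Lambda_d(K)}$ it can be absorbed into the error term throughout. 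The hypothesis $c_ic_j\ge 0$ forces all coefficients to have the same sign, so after replacing $\varphi$ by $-\varphi$ if necessary -- which changes $\tilde\varphi(K_t)$ only by a sign and leaves $\dist_W(\tilde\varphi(K_t),Z)$ unchanged, $Z$ being symmetric -- we may assume $c_i\ge 0$ for all $i$, whence $\varphi$ is monotone under inclusion and $0 \le \varphi(K_t) \le \varphi(K)$ is bounded.

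The first and crucial step is the variance lower bound $\V*{\varphi(K_t)} \ge \kappa\, t^{-\frac{d+3}{d+1}}$ for all large $t$ and some $\kappa = \kappa(K,\varphi)>0$. Writing $\Sigma_t$ for the covariance matrix of $(V_1(K_t),\dots,V_d(K_t))$ and $c=(c_1,\dots,c_d)$, one has $\V*{\varphi(K_t)} = c^{\top}\Sigma_t\, c$, and it is known that $t^{\frac{d+3}{d+1}}\Sigma_t$ converges to a positive-semidefinite matrix $\Sigma$ with \emph{nonnegative} entries and with strictly positive $(k,k)$-entry for each $k\in\{1,\dots,d\}$ -- this is where the $\cC^2_+$-smoothness of $K$ and the matching variance bounds for the intrinsic volumes enter, cf.\ the references in the Introduction. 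Since $c_ic_j\ge 0$ for all $i,j$, every term $c_i\Sigma_{ij}c_j$ is nonnegative, so $c^\top\Sigma\, c \ge \Sigma_{kk}\,c_k^2 > 0$ for the index $k$ with $c_k\ne 0$, and the lower bound follows. This is precisely the point at which the hypothesis is needed: it rules out cancellations that could make $\varphi(K_t)$ asymptotically less variable than $V_k(K_t)$.

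For the normal approximation I would use the second-order Poincar\'e inequality for Poisson functionals (see \cite{LastPenrose2018} and the references therein): for a square-integrable Poisson functional $F$ with $\E*{F}=0$,
\begin{align*}
	\dist_W\rbras*{\tfrac{F}{\sqrt{\V*{F}}},\, Z} \;\le\; \frac{c}{\V*{F}}\,\sqrt{A(F)} \;+\; \frac{c}{(\V*{F})^{3/2}}\,B(F),
\end{align*}
where $A(F)$ and $B(F)$ are explicit integrals over Cartesian powers of $\RR^d$, with respect to $\mu=t\Lambda_d$, of products of $L^4$-norms of the first and second difference operators $D_xF$, $D^2_{x,y}F$. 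Since $F_t$ is a finite linear combination of the $V_i(K_t)$, so are $D_xF_t$ and $D^2_{x,y}F_t$, and it suffices to bound $\E*{(D_xV_i(K_t))^4}$ and $\E*{(D^2_{x,y}V_i(K_t))^4}$ for $i=1,\dots,d$. These are exactly the geometric estimates underlying the univariate intrinsic-volume central limit theorem: adding a point $x$ changes $V_i(K_t)$ only through the cap of $K$ that $x$ cuts off the current hull, the change is bounded by a power of the cap's width, and the relevant range of influence has exponentially decaying tails on the scale $(t^{-1}\log t)^{2/(d+1)}$ (cf.\ \cite{ThaeleTurchiWespi2018,LachiezeReySchulteYukich2017}). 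Integrating these estimates against $\mu$, using that a $\cC^2_+$ body is sandwiched between balls so that volumes and Poisson point-contents of random caps are controlled, gives $A(F_t)=\cO(t^{a_1}(\log t)^{b_1})$ and $B(F_t)=\cO(t^{a_2}(\log t)^{b_2})$ with explicit exponents; inserting the variance lower bound of the previous step and simplifying yields $\dist_W(\tilde\varphi(K_t),Z)=\cO\rbras*{t^{-\frac12+\frac1{d+1}}\log(t)^{3+\frac{2}{d+1}}}$.

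The main obstacle is the geometric part: showing that $D_xV_i(K_t)$ and $D^2_{x,y}V_i(K_t)$ are uniformly bounded, are supported up to an exponentially small error on a neighbourhood of $\partial K$ of thickness of order $(t^{-1}\log t)^{2/(d+1)}$, and have $L^4$-norms decaying fast enough that $A(F_t)$ and $B(F_t)$ converge with the required exponents -- this is also the source of the logarithmic factors in the rate. It rests on moment bounds for cap volumes and for the number of Poisson points contained in caps of a $\cC^2_+$ body, together with the exponential tail of the stabilization radius. The identification of the limit matrix $\Sigma$ and the verification that its entries are nonnegative is the second, more algebraic, ingredient; once both are available, the remainder is a bookkeeping exercise inside the Malliavin--Stein inequality.
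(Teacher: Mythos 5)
Your overall architecture coincides with the paper's (Hadwiger decomposition, a variance lower bound extracted from the sign condition, a Malliavin--Stein bound in terms of first and second difference operators, floating-body geometry), but the step on which the hypothesis $c_ic_j\ge 0$ actually acts contains a genuine gap: you justify the lower bound $\V*{\varphi(K_t)}\gg t^{-1-\frac{2}{d+1}}$ by asserting that $t^{1+\frac{2}{d+1}}\Sigma_t$ converges to a limit covariance matrix with nonnegative entries and positive diagonal. No such convergence is known for a general smooth convex body $K$ -- the paper explicitly lists the limits of the variances and covariances as an open problem (only the ball case is settled, via the paraboloid growth process of Calka--Schreiber--Yukich) -- so as written this step rests on an unavailable result. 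It is also unnecessary: all you need is that at every finite $t$ the covariances $\Cov*{V_i(K_t)}{V_j(K_t)}$ are nonnegative, which follows from the Harris--FKG inequality for Poisson processes \cite{Last2016} because $D_xV_j(K_t)\ge 0$ for all $x$, combined with the known matching-order bounds $\V*{V_k(K_t)}=\Theta\rbras*{t^{-1-\frac{2}{d+1}}}$ for $k\in\{1,\dots,d\}$; then every term of the quadratic form $\sum_{i,j}c_ic_j\Cov*{V_i(K_t)}{V_j(K_t)}$ is nonnegative and $\V*{\varphi(K_t)}\ge c_k^2\,\V*{V_k(K_t)}$. Phrased this way the step is sound and is exactly what the paper does.

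The second gap is the geometric core, which you leave as ``explicit exponents''. For the claimed rate you need the uniform bound $D_xV_j(K_t)\ll \log(t)/t$ for \emph{every} $j\in\{1,\dots,d\}$, on the high-probability event that the floating body $K(v\ge\varepsilon_t)$ with $\varepsilon_t\asymp\log(t)/t$ is contained in $K_t$. A bound ``by a power of the cap's width'' only gives $D_xV_j(K_t)\ll(\log(t)/t)^{\frac{j+1}{d+1}}$, which for $j<d$ is too weak: inserted into $\tau_3$ (or your $B(F_t)$) after dividing by $\V*{\varphi(K_t)}^{3/2}\asymp t^{\frac{3}{2}+\frac{3}{d+1}}$ it produces a bound that does not even tend to zero. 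The missing factor $(\log(t)/t)^{\frac{d-j}{d+1}}$ comes, in the paper, from Kubota's formula: the projection of the modified region onto $\bbL\in G(d,j)$ is nonzero only when $\bbL$ makes an angle $\ll(\log(t)/t)^{\frac{1}{d+1}}$ with the direction of the nearest boundary point, and the $\nu_j$-measure of such subspaces is $\Theta\rbras*{(\log(t)/t)^{\frac{d-j}{d+1}}}$ (Lemma \ref{lem:measure-of-angle-set}). Likewise, for $\tau_1,\tau_2$ one needs the localization that $D^2_{x,y}$ of \emph{any} valuation vanishes whenever the visibility regions of $x$ and $y$ are disjoint (the identities $K_t^x\cap K_t^y=K_t$, $K_t^x\cup K_t^y=K_t^{xy}$ plus the valuation property, Lemma \ref{lem:second-order-diff-volume}), which supplies the extra factor $\log(t)/t$ in the double integrals; your appeal to exponential tails of a stabilization radius points in the right direction but is not carried out, and these two estimates are precisely where the exponent $3+\frac{2}{d+1}$ of the logarithm originates.
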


For $j \in \cbras*{1, \ldots, d}$ we can directly obtain the central limit theorem for the standardized $j$-th intrinsic volume in the Poisson model by setting $\varphi(K_t) = V_j(K_t)$.

Further we can directly obtain a central limit theorem with rate of convergence for the total intrinsic volume, also known as the Wills functional, see \cite{Wills1973,Hadwiger1975,McMullen1975}, setting  the coefficients $c_j = 1$, $j \in \cbras*{0,\ldots,d}$ in Theorem \ref{thm:univariate}:

\begin{corollary}[Wills functional]
	Let $W(K_t)$ denote the Wills functional, defined by $W(K_t) = \sum_{j=0}^d V_j(K_t)$ and denote by $\tilde{W}(K_t)$ the corresponding standardization.
	Then 
	\begin{align}
		\dist_W(\tilde{W}(K_t), Z) = \cO\rbras*{t^{-\frac{1}{2} + \frac{1}{d+1}} \log(t)^{3 + \frac{2}{d+1}}},
	\end{align}
	where $Z \distribute \cN(0,1)$.
\end{corollary}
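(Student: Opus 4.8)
The plan is to verify that the Wills functional $W$ satisfies the hypotheses of Theorem~\ref{thm:univariate} and then to quote that theorem directly. First I would recall that for each $j \in \cbras*{0, \ldots, d}$ the intrinsic volume $V_j \colon \cK^d \to \RR$ is a continuous and motion invariant valuation, and that a finite linear combination of continuous motion invariant valuations is again of this type. Hence $W = \sum_{j=0}^{d} V_j$ is a continuous and motion invariant valuation, and by the uniqueness part of Hadwiger's theorem (Theorem~\ref{thm:hadwiger}) its linear decomposition \eqref{eq:hadwiger} is precisely the one with $c_j = 1$ for every $j \in \cbras*{0, \ldots, d}$.

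It then remains to check the two conditions on the coefficients required in Theorem~\ref{thm:univariate}. Since $c_i c_j = 1 \geq 0$ for all $i, j \in \cbras*{0, \ldots, d}$, the sign condition holds; and $c_1 = 1 \neq 0$ provides an index $k \in \cbras*{1, \ldots, d}$ with $c_k \neq 0$, so the nondegeneracy condition holds as well. Feeding $\varphi = W$ into Theorem~\ref{thm:univariate} yields $\dist_W(\tilde{W}(K_t), Z) = \cO\rbras*{t^{-\frac{1}{2} + \frac{1}{d+1}} \log(t)^{3 + \frac{2}{d+1}}}$ with $Z \distribute \cN(0,1)$, which is the claim. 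In particular Theorem~\ref{thm:univariate} itself guarantees $\V*{W(K_t)} > 0$ for $t$ large enough, so that the standardization $\tilde{W}(K_t)$ appearing in the statement is well-defined.

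I do not expect any genuine analytic obstacle in this corollary: the entire point is that the all-ones coefficient vector trivially obeys the sign restriction $c_i c_j \geq 0$ that constrains the class of valuations covered by Theorem~\ref{thm:univariate}. The only thing that must be spelled out with a little care is that $W$ is indeed a continuous motion invariant valuation and that its Hadwiger coefficients are all equal to $1$; everything quantitative is inherited verbatim from Theorem~\ref{thm:univariate}.
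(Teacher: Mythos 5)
Your proof is correct and takes exactly the same route as the paper: the paper simply observes that the Wills functional corresponds to the coefficient choice $c_j = 1$ for all $j \in \cbras*{0,\ldots,d}$ in Hadwiger's decomposition and feeds this into Theorem~\ref{thm:univariate}. You merely spell out the (standard) verification that $W$ is a continuous motion invariant valuation and that the sign and nondegeneracy conditions on the coefficients hold, which is a careful but entirely equivalent presentation.
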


In the remarkable work \cite{BaldinReiss2016} on the estimation of the volume of a convex body $V_d(K)$ given the intensity $t > 0$ is known, the oracle estimator 
\begin{align*}
	\hat{\vartheta}_{oracle}(K_t) = V_d(K_t) + \frac{f_0(K_t)}{t},
\end{align*}
is derived.
This estimator is unbiased,
\begin{align*}
	\E*{\hat{\vartheta}_{oracle}(K_t)} = V_d(K),
\end{align*}
and of minimal variance among all unbiased estimators (UMVU), see \cite[eq. (3.2), Theorem 3.2]{BaldinReiss2016}. Additionally the variance can be obtained by combining \cite[Theorem 3.2]{BaldinReiss2016} with \cite[Lemma 1]{Reitzner2005a} yielding

\begin{align}\label{eq:oracle-variance}
	\V*{\hat{\vartheta}_{oracle}(K_t)} = \frac{1}{t} \E*{V_d(K \setminus K_t)} = \gamma_d \Omega(K)(1+o(1))t^{-1-\frac{2}{d+1}},
\end{align}
for $t \rightarrow \infty$, where the constant $\gamma_d$ only depends on the dimension and is known explicitly and $\Omega(K)$ denotes the affine surface area of $K$.
This enables us to prove the following univariate limit theorem for the oracle estimator:

\begin{theorem}[Oracle estimator]\label{thm:oracle}
	Let $\hat{\vartheta}_{oracle}$ be the oracle estimator for a smooth convex body $K \in \cK^d_{sm}$ and denote by $\tilde{\vartheta}_{oracle}$ the corresponding standardization. 
	Then
	\begin{align}
		\dist_W(\tilde{\vartheta}_{oracle}(K_t),Z) = \cO\rbras*{t^{-\frac{1}{2}+\frac{1}{d+1}} \log(t)^{3d+\frac{2}{d+1}}},
	\end{align}
	where $Z \distribute \cN(0,1)$.
\end{theorem}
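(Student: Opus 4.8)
\emph{Proof strategy.} The plan is to feed $F = \hat{\vartheta}_{oracle}(K_t) = V_d(K_t) + t^{-1} f_0(K_t)$ directly into the Malliavin--Stein bound for Poisson functionals set up in Section~\ref{sec:malliavin-stein:uni} (the same machinery behind Theorem~\ref{thm:univariate}), rather than trying to combine separate limit theorems for $V_d(K_t)$ and $t^{-1}f_0(K_t)$: the Wasserstein distance is not additive, and the two summands are strongly negatively correlated, which is exactly what keeps $\V*{F}$ of the small order in \eqref{eq:oracle-variance}. Note that $F$ is not itself a continuous motion invariant valuation (because $f_0$ is neither additive nor continuous), so Theorem~\ref{thm:univariate} does not apply to it directly. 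By linearity of the add-one operator, $D_x F = D_x V_d(K_t) + t^{-1} D_x f_0(K_t)$, and likewise for the iterated operator $D^2_{x,y}F$, so the Malliavin--Stein bound splits into a volume part -- which is the $V_d$-instance of Theorem~\ref{thm:univariate} and was estimated there -- and a genuinely new vertex part. The denominator of the bound is a power of $\V*{F}$, and here we get it for free: \eqref{eq:oracle-variance} gives $\V*{F} = \gamma_d \Omega(K)(1+o(1)) t^{-1-\frac{2}{d+1}}$, the same order in $t$ as $\V*{V_d(K_t)}$.

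The core of the argument is controlling the vertex difference operators. \textbf{Localization:} $D_x F \neq 0$ forces $x \in K$ and, off an event of negligible probability, $x$ to lie in a boundary layer of $K$ of width $\cO(t^{-\frac{2}{d+1}} \log(t)^{\frac{2}{d+1}})$ -- the standard economic-cap / R\'enyi--Sulanke estimate, the same layer that governs the volume part. \textbf{Magnitude:} whereas $\abs{D_x V_d(K_t)}$ is bounded by the volume of the cap cut off at $x$, which is $\cO(t^{-1}\log(t))$ off a negligible event exactly as in Theorem~\ref{thm:univariate}, the vertex cost is $\abs{D_x f_0(K_t)} = 1 + (\text{number of vertices of } K_t \text{ absorbed when } x \text{ is inserted})$, and this count is \emph{not} bounded: it admits only polylogarithmic moment bounds, uniformly in $x$ and $t$, with $\log$-exponent growing linearly in $d$ (roughly $\log(t)^{d}$ per copy). \textbf{Second differences:} $D^2_{x,y}F$ vanishes unless $x$ and $y$ both lie in the boundary layer and in a common stabilization region, which supplies the extra localization of $y$ given $x$ needed for the double integrals to converge.

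Assembling: substituting these estimates into the three terms of the Malliavin--Stein bound (the one controlled by $\E*{\abs{D_xF}^3}$ and the two controlled by $\E*{(D^2_{x,y}F)^2}$-type integrals), the boundary-layer $t$-measure is of order $t^{1-\frac{2}{d+1}}$, the per-point magnitudes contribute further powers of $t^{-1}$, and dividing by the appropriate power of $\V*{F} \asymp t^{-1-\frac{2}{d+1}}$ makes the $t$-exponent collapse to $-\tfrac12 + \tfrac1{d+1}$ -- identically to the computation behind Theorem~\ref{thm:univariate}, since the variance order and the layer width are unchanged. The logarithmic factors accumulate differently: the dominant term carries three copies of a vertex difference operator, and since each costs a factor of order $\log(t)^{d}$ (against order $\log(t)$ for the smooth valuation $\varphi$), the exponent $3$ of $\log(t)$ in Theorem~\ref{thm:univariate} is replaced by $3d$, while the residual $\tfrac{2}{d+1}$ is the unchanged cap-height truncation term, giving $\log(t)^{3d+\frac{2}{d+1}}$. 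The main obstacle is precisely the magnitude estimate of the second paragraph: establishing the uniform polylogarithmic moment bounds on the number of faces created and destroyed by inserting a single point -- and the analogous bound for the second-difference quantity -- which requires a union bound over the caps meeting a neighbourhood of $x$ together with the deviation estimates for $f_0(K_t)$ and for the local cap geometry underpinning the economic-cap-covering method.
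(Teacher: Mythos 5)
Your proposal is correct and follows essentially the same route as the paper: normalize by $\sqrt{\V*{\hat{\vartheta}_{oracle}(K_t)}} \asymp t^{-\frac{1}{2}-\frac{1}{d+1}}$ from \eqref{eq:oracle-variance}, split $D_x$ and $D^2_{x,y}$ linearly into a $V_d$-part and a $t^{-1}f_0$-part, bound the mixed moments via binomial expansion plus Cauchy--Schwarz using Lemma~\ref{lem:diff-moments-volume} and Lemma~\ref{lem:diff-moments-f}, and insert into Theorem~\ref{thm:bound-uni} with the same boundary-layer bound \eqref{eq:missed-volume-bound} and common-visibility bound \eqref{eq:visible-intersection-bound}. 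The ``main obstacle'' you flag -- uniform polylogarithmic moment bounds for $D_xf_0(K_t)$ and $D^2_{x,y}f_0(K_t)$ -- is exactly Lemma~\ref{lem:diff-moments-f}, which the paper has already established in the course of proving Theorem~\ref{thm:multivariate} and simply reuses here.
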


Finally we provide a multivariate limit theorem for the intrinsic volumes and the components of the $\bff$-vector of the random polytope $K_t$, that were considered in \cite[Theorem 5.5]{LachiezeReySchulteYukich2017} in the univariate case.

We denote by $\dist_3$ the distance given by \eqref{eq:metric-3} in Section \ref{sec:malliavin-stein:multi}.

\begin{theorem}[Multivariate Limit Theorem]\label{thm:multivariate}
	Let
	\begin{align}\label{eq:multivariate-functional}
		F(K_t) : = \rbras*{\tilde{V}_1(K_t), \ldots, \tilde{V}_d(K_t), \tilde{f}_0(K_t), \ldots, \tilde{f}_{d-1}(K_t)} \in \RR^{2d}
	\end{align}		
	be the vector of the standardized intrinsic volumes $\tilde{V}_j$ and standardized number of $k$-dimensional faces $\tilde{f}_k(K_t)$, i.e.
	\begin{align}
		\tilde{V}_j(K_t) := \frac{V_j(K_t)-\E*{V_j(K_t)}}{\sqrt{\V*{V_j(K_t)}}} 
		\quad 
		\text{and}
		\quad
		\tilde{f}_k(K_t) := \frac{f_k(K_t)-\E*{f_k(K_t)}}{\sqrt{\V*{f_k(K_t)}}}.
	\end{align}	 
	We denote by $F_i := F_i(K_t)$, $i = 1, \ldots, 2d$ the $i$-th component of the multivariate functional $F(K_t)$.
	Define $\Sigma(t) = (\sigma_{ij}(t))_{i,j \in \cbras*{1, \ldots, 2d}}$ as the covariance matrix of $F(K_t)$, i.e. $\sigma_{ij}(t) := \Cov*{F_i}{F_j}$ and $\sigma_{ii}(t) = 1$ for all $i \in \cbras*{1, \ldots, 2d}$ and all $t > 0$.
	Then
	\begin{align}
		\dist_3(F(K_t), N_{\Sigma(t)}) = \cO\rbras*{t^{-\frac{1}{2}+\frac{1}{d+1}}\log(t)^{3d+\frac{2}{d+1}}}
	\end{align}
	where $N_{\Sigma(t)} \distribute \cN(0, \Sigma(t))$.
\end{theorem}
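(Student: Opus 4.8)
The plan is to invoke the multivariate Malliavin--Stein bound for vectors of Poisson functionals in the $\dist_3$-distance recalled in Section~\ref{sec:malliavin-stein:multi}, and to feed into it the stabilization estimates for the convex-hull functionals that already underlie Theorem~\ref{thm:univariate} and Theorem~\ref{thm:oracle} (and, for the $\bff$-vector, \cite[Theorem~5.5]{LachiezeReySchulteYukich2017}). Write $\psi_i$ for the $i$-th underlying geometric functional, so that $\psi_i = V_i$ for $i \in \cbras*{1,\ldots,d}$ and $\psi_i = f_{i-d-1}$ for $i \in \cbras*{d+1,\ldots,2d}$, and put $\sigma_i(t)^2 := \V*{\psi_i(K_t)}$. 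Since $F_i = \sigma_i(t)^{-1}\rbras*{\psi_i(K_t) - \E*{\psi_i(K_t)}}$, the first and second add-one-cost (difference) operators factor as $D_x F_i = \sigma_i(t)^{-1} D_x \psi_i(K_t)$ and $D^2_{x,y} F_i = \sigma_i(t)^{-1} D^2_{x,y}\psi_i(K_t)$; thus the abstract bound is reduced to two kinds of input: (i) polynomial two-sided bounds on the variances $\sigma_i(t)^2$, which are classical for smooth $K$ (see \cite{ThaeleTurchiWespi2018} and \eqref{eq:oracle-variance} for the intrinsic volumes, and \cite{Reitzner2010} and the references therein for the $\bff$-vector), and (ii) bounds on the mixed moments $\E*{|D_x\psi_i|^{p}\,|D_x\psi_j|^{q}}$ and $\E*{|D^2_{x,y}\psi_i|^{p}\,|D^2_{x,y}\psi_j|^{q}}$ of the difference operators, uniformly over all pairs $(i,j)\in\cbras*{1,\ldots,2d}^2$ and over $x,y\in K$.

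The estimates in (ii) are the geometric core, and up to uniformity over $(i,j)$ they are exactly the ones behind the univariate theorems. The add-one cost $D_x\psi_i(K_t)$ vanishes unless the inserted point $x$ becomes a vertex of $\conv\rbras*{(\eta_t\cap K)\cup\cbras*{x}}$, which happens only for $x$ in a thin boundary region whose local geometry --- the size of the relevant cap and the associated stabilization radius --- has rapidly decaying tails, with the usual logarithmic corrections. On that common event all of $|D_x V_j(K_t)|$ and $|D_x f_k(K_t)|$ admit moment bounds in terms of these caps and radii (for the $\bff$-vector one also uses that a new vertex has a vertex degree with finite moments of every order), so H\"older's inequality factorizes the mixed moments in (ii) into products of single-functional moments of precisely the type estimated in \cite{LachiezeReySchulteYukich2017} and in the proofs of Theorems~\ref{thm:univariate} and~\ref{thm:oracle}. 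The ensuing double integrals against $t\Lambda_d$ over $x,y\in K$ are therefore of the same order as in the univariate setting. Substituting these together with the variance lower bounds into the $\dist_3$-inequality, every term is of order at most $t^{-\frac12+\frac{1}{d+1}}$ times a power of $\log t$; keeping track of the slowest-decaying contribution --- which stems from the $\bff$-vector and reproduces the logarithmic exponent already obtained for the oracle estimator in Theorem~\ref{thm:oracle} --- gives the asserted rate.

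The main obstacle is the bookkeeping of the cross terms. The $\dist_3$-bound couples difference operators of up to four coordinates of $F$, so one genuinely needs the localization ``$\psi_i(K_t\cup\cbras*{x}) - \psi_i(K_t)=0$ off a single stabilization event common to all $i$'' together with uniform control of the joint moments $\E*{|D_x\psi_i|^{p_1}|D_x\psi_j|^{p_2}}$ over all $2d\times 2d$ pairs; this is where the logarithmic factors accumulate into the exponent $3d+\frac{2}{d+1}$, the $\bff$-vector coordinates being the costly ones. A second point, which causes no difficulty, is that $\Sigma(t)$ may be singular or nearly singular: unlike the Wasserstein distance used in Theorems~\ref{thm:univariate} and~\ref{thm:oracle}, the distance $\dist_3$ of \eqref{eq:metric-3} is defined for arbitrary covariance matrices and the bound does not involve $\Sigma(t)^{-1}$, so $N_{\Sigma(t)}$ and the estimate remain meaningful for every $t>0$.
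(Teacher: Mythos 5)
Your proposal is correct and follows essentially the same route as the paper: invoke the multivariate Malliavin--Stein bound in the $\dist_3$-distance with $\Sigma(t)$ chosen as the exact covariance matrix of $F(K_t)$ (so the first term in the bound vanishes), reduce via the scaling $D_xF_i = \sigma_i(t)^{-1}D_x\psi_i(K_t)$ to moment bounds on the difference operators of $V_j$ and $f_k$, localize those via the floating-body/cap-covering event, and then integrate over $K^3$ using the same volume and visibility-overlap estimates as in the univariate proof, noting that the $\bff$-vector coordinates contribute the dominant $\log(t)^{3d+\frac{2}{d+1}}$ factor. One small inaccuracy in your write-up: the abstract bound $\gamma_1,\gamma_2,\gamma_3$ is already expressed as products of fourth moments of \emph{individual} difference operators $\rbras*{\E*{(D^2F_i)^4}\E*{(D F_j)^4}\cdots}^{1/4}$, so no H\"older step is needed to factorize joint moments $\E*{|D_x\psi_i|^p|D_x\psi_j|^q}$; the input is exactly the single-functional moment bounds of the kind established in the paper's Lemmas on $D_xV_j$ and $D_xf_j$, plus the separate combinatorial observation that $D^2_{x,y}f_j(K_t)$ vanishes when the visibility regions of $x$ and $y$ are disjoint (the $\bff$-vector analogue of the valuation-cancellation lemma), which your ``single stabilization event common to all $i$'' correctly anticipates.
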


Note that $N_{\Sigma(t)}$ still depends on the intensity $t$. 
This gives rise to the following questions:

\bigskip

\textbf{Open Problems:}

The limit of the variances and co-variances are still unknown, thus we can not set $\sigma_{ij}$ to be the limit of the correlations (rescaled co-variances) $\sigma_{ij}(t)$.
These limits would give rise to a limit theorem providing a fixed multivariate Gaussian distribution $\cN(0, \Sigma)$ with fixed co-variance matrix $\Sigma$. In this case, the rate of the limit theorem will also contain the rate of the correlations on the right hand side of the bound, thus it would be beneficial to obtain the limit $\sigma_{ij}$ of $\sigma_{ij}(t)$ including an upper bound for $\abs{\sigma_{ij}(t) - \sigma_{ij}}$, since the convergence of the correlations could be slower than the rate given in Theorem \ref{thm:multivariate}, slowing down the overall convergence. 
We should mention, that Calka, Schreiber and Yukich, were able to derive limits for the variance in the case that $K$ is the euclidean unit ball using the paraboloid growth process, see \cite{CalkaSchreiberYukich2013}, but up to our knowledge there are no results on the limit of the variance in a general (smooth) convex body and also no results on the co-variances of $F(K_t)$.

Note that the Euler-Poincar\'{e} equations and more general the Dehn-Sommerville equations, see \cite[Chapter 8]{Ziegler1995},  imply linear dependencies on the components of the $\bff$-vector. Thus the covariance matrix $\Sigma(t)$ is singular and therefore $\rank*{\Sigma(t)} < 2d$, which gives rise to the question what $\rank*{\Sigma(t)}$ actually is and if this also applies to the limiting co-variance matrix $\Sigma$?

\begin{remark}
	Note that the univariate results can be derived for the $d_3$-distance using the multivariate result. Since the additional work that is needed to prove the univariate results alongside the multivariate limit theorem is small, we decided to directly proof the univariate results in the Wasserstein distance.
\end{remark}

\bigskip

The paper is organized as follows. For the convenience of the reader, we repeat the relevant material on the Malliavin-Stein-Method for normal approximation of Poisson functionals in Section 2. In Section 3 we introduce some background material on convex geometry and corresponding estimations using floating bodies without proofs, to keep our presentation reasonably self-contained.
The proofs of our main results are presented in Section 4, starting with the central limit theorem for valuations, Theorem \ref{thm:univariate}, in Subsection 4.1. handling the intrinsic volumes. In Subsection 4.2 we prove the multivariate limit theorem, Theorem \ref{thm:multivariate} by extending our proof to the components of the $\bff$-vector. Finally we can combine the results derived in the proofs before to obtain the central limit theorem for the oracle estimator, Theorem \ref{thm:oracle}.

\section{Stochastic preliminaries}

Let $\eta$ be a Poisson point process over the Euclidean space $(\RR^d, \sB^d)$ with intensity measure $\mu$. 
One can think of $\eta$ as a random element in the space $\rN_\sigma$ of all $\sigma$-finite counting measures $\chi$ on $\RR^d$, i.e. $\chi(B) \in \NNN \cup \{ \infty \}$ for all $B \in \sX$, where the space $\rN_\sigma$ is equipped with the $\sigma$-field $\sN_\sigma$ generated by the mappings $\chi \rightarrow \chi(B)$, $B \in \sB^d$.
To simplify our notation we will often handle $\eta$ as a random set of points given by
\begin{align*}
	x \in \eta \Leftrightarrow x \in \cbras*{y \in \RR^d : \eta(\cbras{y}) > 0}.
\end{align*}
We call a random variable $F$ a Poisson functional if there exists a measurable map $f:\rN_\sigma \rightarrow \RR$ such that $F = f(\eta)$ almost surely.
The map $f$ is called the representative of $F$.
We define the difference operator or so called ``add-one-cost operator'':

\begin{definition}
	Let $F$ be a Poisson functional and $f$ its corresponding representative, then the first order difference operator is given by
	\begin{align*}
		D_xF := f(\eta + \delta_x) - f(\eta), \quad x \in \RR^d,
	\end{align*}
	where $\delta_x$ denotes the Dirac measure with mass concentrated in $x$.
	We say that $F$ belongs to the domain of the difference operator, i.e. $F \in \dom{D}$, if $\E*{F^2} < \infty$ and
	\begin{align*}
		\int\limits_{\RR^d} \! \E*{(D_xF)^2} \, \mu(\id x) < \infty.
	\end{align*}
	The second order difference operator is obtained by iterating the definition:
	\begin{align*}
		D_{x_1,x_2}^2F & := D_{x_1}(D_{x_2}F)\\
			& \phantom{:}= f(\eta + \delta_{x_1} + \delta_{x_2}) - f(\eta + \delta_{x_1}) - f(\eta + \delta_{x_2}) + f(\eta), \quad x_1, x_2 \in \RR^d.
	\end{align*}
\end{definition}

For a depper discussion of the underlying theory of Poisson point processes, Malliavin-Calculus, the Wiener-It\^{o}-Chaos Expansion and the Malliavin-Stein Method see \cite{PeccatiReitzner2016} and \cite{LastPenrose2018}.

\subsection{Malliavin-Stein-Method for the univariate case}\label{sec:malliavin-stein:uni}

We denote by $\Lip{1}$ the class of Lipschitz functions $h:\RR \rightarrow \RR$ with Lipschitz constant less or equal to one.
Given two $\RR$-valued random variables $X,Y$, with $\Eabs*{X} < \infty$ and $\Eabs*{Y} < \infty$ the Wasserstein distance between the laws of $X$ and $Y$, written $\dist_W(X,Y)$ is defined as
\begin{align}\label{eq:metric-wasserstein}
	\dist_W(X,Y) := \sup\limits_{h \in \Lip{1}} \abs*{\E*{h(X)} - \E*{h(Y)}}.
\end{align}

Note that if a sequence $(X_n)$ of random variables satisfies $\lim_{n \rightarrow \infty} \dist_W(X_n, Y) = 0$, then it holds that $X_n$ converges to $Y$ in distribution, see \cite[p. 219 and Proposition B.9]{LastPenrose2018}. Especially if $Y$ has standard Gaussian distribution, we obtain a central limit theorem by showing $\dist_W(X_n, Y) \rightarrow 0$, which we will achieve by rephrasing the bound given by \cite[Theorem 1.1]{LastPeccatiSchulte2016} which is an extension of \cite[Theorem 3.1]{PeccatiSoleTaqquUtzet2010}, see also \cite[Chapter 21.1, 21.2]{LastPenrose2018} for a slightly different form and proofs.

\begin{theorem}\label{thm:bound-uni}
	Let $F \in \dom{D}$ be a Poisson functional such that $\E*{F} = 0$ and $\V*{F} = 1$.
	Define
	\begin{align*}
		\tau_1 & := \int\limits_{K^3} \! 
				\rbras*{\E*{(D_{x_1,x_3}^2F)^4}\E*{(D_{x_2,x_3}^2 F)^4} \E*{(D_{x_1}F)^4}\E*{(D_{x_2}F)^4}}^{\frac{1}{4}} 
				\mu^3(\id (x_1, x_2, x_3))\\
		\tau_2 & := \int\limits_{K^3} \! 
				\rbras*{\E*{(D_{x_1,x_3}^2 F)^4}\E*{(D_{x_2,x_3}^2 F)^4}}^{\frac{1}{2}}
				\mu^3(\id (x_1, x_2, x_3))\\
		\tau_3 & := \int\limits_{K}
				\Eabs*{D_xF}^3
				\mu(\id x)
	\end{align*}
	and let $Z$ be a standard Gaussian random variable,
	then
	\begin{align*}
		\dist_W(F,Z) \leq 2 \sqrt{\tau_1} + \sqrt{\tau_2} + \tau_3.
	\end{align*}
\end{theorem}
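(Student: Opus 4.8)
The plan is to obtain Theorem~\ref{thm:bound-uni} from the general second-order Poincar\'e bound for Poisson functionals of Last, Peccati and Schulte \cite[Theorem~1.1]{LastPeccatiSchulte2016}: I would quote that bound, simplify the moment quantities occurring in it by the Cauchy--Schwarz inequality, and finally restrict the integration domains from $\RR^d$ to $K$. In this sense the statement is a convenient reformulation of a known estimate, and no genuinely new probabilistic argument is involved.

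First I would recall the abstract Malliavin--Stein estimate that underlies \cite{LastPeccatiSchulte2016}: for a Poisson functional $F \in \dom{D}$ with $\E*{F} = 0$ and $\V*{F} = 1$,
\begin{align*}
	\dist_W(F,Z) \le \Eabs*{1 - \langle DF, -DL^{-1}F\rangle_{L^2(\mu)}} + \int_{\RR^d} \E*{(D_xF)^2\,|D_xL^{-1}F|}\,\mu(\id x),
\end{align*}
where $L^{-1}$ denotes the pseudo-inverse of the Ornstein--Uhlenbeck generator. Estimating $-DL^{-1}F$ via Mehler's formula and bounding the result by second-order difference operators, as in the proof of \cite[Theorem~1.1]{LastPeccatiSchulte2016}, one obtains a bound of the shape $2\sqrt{\gamma_1}+\sqrt{\gamma_2}+\gamma_3$, where
\begin{align*}
	\gamma_1 &= \int_{(\RR^d)^3} \rbras*{\E*{(D_{x_1,x_3}^2F)^2(D_{x_2,x_3}^2F)^2}}^{\frac12}\rbras*{\E*{(D_{x_1}F)^2(D_{x_2}F)^2}}^{\frac12}\,\mu^3(\id (x_1,x_2,x_3)),\\
	\gamma_2 &= \int_{(\RR^d)^3} \E*{(D_{x_1,x_3}^2F)^2(D_{x_2,x_3}^2F)^2}\,\mu^3(\id (x_1,x_2,x_3)),\\
	\gamma_3 &= \int_{\RR^d} \Eabs*{D_xF}^3\,\mu(\id x),
\end{align*}
so that $\gamma_3$ already has exactly the form of $\tau_3$.

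Next I would pass from $\gamma_1,\gamma_2$ to $\tau_1,\tau_2$ by applying the Cauchy--Schwarz inequality inside each integrand, separating the two variables entering every expectation:
\begin{align*}
	\E*{(D_{x_1,x_3}^2F)^2(D_{x_2,x_3}^2F)^2} &\le \rbras*{\E*{(D_{x_1,x_3}^2F)^4}}^{\frac12}\rbras*{\E*{(D_{x_2,x_3}^2F)^4}}^{\frac12},\\
	\E*{(D_{x_1}F)^2(D_{x_2}F)^2} &\le \rbras*{\E*{(D_{x_1}F)^4}}^{\frac12}\rbras*{\E*{(D_{x_2}F)^4}}^{\frac12}.
\end{align*}
Inserting the first bound into $\gamma_2$, and both bounds (after taking square roots) into $\gamma_1$, turns their integrands into exactly the integrands of $\tau_2$ and $\tau_1$, except that the integration still runs over $(\RR^d)^3$ rather than $K^3$. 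Since every functional to which Theorem~\ref{thm:bound-uni} will be applied is of the form $\tilde\varphi(K_t)$ and hence depends only on $\eta$ restricted to $K$, the difference operators $D_xF$ and $D_{x_1,x_2}^2F$ vanish whenever one of their arguments lies outside $K$; therefore the integrals over $\RR^d$ and $(\RR^d)^3$ collapse to integrals over $K$ and $K^3$. Combining this with the estimate $2\sqrt{\gamma_1}+\sqrt{\gamma_2}+\gamma_3$ gives $\dist_W(F,Z)\le 2\sqrt{\tau_1}+\sqrt{\tau_2}+\tau_3$, as claimed.

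The only nontrivial effort here is bookkeeping rather than mathematics: reproducing the precise constants and moment expressions of \cite[Theorem~1.1]{LastPeccatiSchulte2016} (in particular tracking the factor $2$ in front of $\sqrt{\tau_1}$, which collects the two structurally identical triple integrals arising in the Mehler/Poincar\'e expansion of $\Eabs*{1-\langle DF,-DL^{-1}F\rangle_{L^2(\mu)}}$, each of which is bounded by $\tau_1$ after Cauchy--Schwarz), checking that each Cauchy--Schwarz step splits precisely the intended pair of variables, and verifying that the localization to $K$ is admissible for the functionals under consideration. No probabilistic input beyond the cited bound is needed.
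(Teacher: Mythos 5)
Your proposal is correct and matches the paper's approach exactly: the paper itself presents Theorem~\ref{thm:bound-uni} as a rephrasing of \cite[Theorem~1.1]{LastPeccatiSchulte2016}, and your derivation simply makes that rephrasing explicit by applying Cauchy--Schwarz inside the $\gamma_1,\gamma_2$ integrands to factor the mixed fourth moments into products of pure fourth moments, and by localizing the integration from $\RR^d$ to $K$ using that the difference operators of the relevant functionals vanish off $K$. No gap; the bookkeeping you describe is precisely what is needed.
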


\subsection{Malliavin-Stein-Method for the multivariate case}\label{sec:malliavin-stein:multi}

We denote by $\cH_m$ the class of all $\cC^3$-functions $h:\RR^m \rightarrow \RR$ such that all absolute values of the second and third partial derivatives are bounded by one, i.e.
\begin{align*}
	\max\limits_{1 \leq i_1 \leq i_2 \leq m} \sup\limits_{x \in \RR^d} \abs*{\frac{\partial^2}{\partial x_{i_1} \partial x_{i_2}} h(x)} \leq 1 \quad \text{and} \quad
	\max\limits_{1 \leq i_1 \leq i_2 \leq i_3 \leq m} \sup\limits_{x \in \RR^d} \abs*{\frac{\partial^3}{\partial x_{i_1} \partial x_{i_2} \partial x_{i_3}} h(x)} \leq 1.
\end{align*}
Given two $\RR^m$-valued random vectors $X,Y$ with $\dsE\norm*{X}^2 < \infty$ and $\dsE\norm{Y}^2 < \infty$ the distance $\dist_3$ between the laws of $X$ and $Y$, written $\dist_3(X,Y)$ is defined as
\begin{align}\label{eq:metric-3}
	\dist_3(X,Y) := \sup\limits_{h \in \cH_m}\abs*{\E*{h(X)} - \E*{h(Y)}}.
\end{align} 

Note that if a sequence $(X_n)$ of random vectors satisfies $\lim_{n \rightarrow \infty} \dist_3(X_n, Y) = 0$, then it holds that $X_n$ converges to $Y$ in distribution, see \cite[Remark 2.16]{PeccatiZheng2010}. Especially if $Y$ is a $m$-dimensional centered Gaussian vector with covariance matrix $\Sigma \in \RR^{m \times m}$, we obtain a multivariate limit theorem by showing $\dist_3(X_n, Y) \rightarrow 0$.
We will achieve this, similar to the univariate central limit theorem, by rephrasing the bound given by \cite[Theorem 1.1]{SchulteYukich2018} which extends \cite{PeccatiZheng2010}, to provide the multivariate analogon to the univariate result derived in \cite{LastPeccatiSchulte2016}, that was stated here as Theorem \ref{thm:bound-uni} above.

\begin{theorem}\label{thm:bound-multi}
	Let $F = (F_1, \ldots, F_m)$, $m \geq 2$, be a vector of Poisson functionals $F_1, \ldots, F_m \in \dom{D}$ with $\E*{F_i} = 0$, $i \in \cbras*{1, \ldots, m}$.
	Define
	\begin{align*}
		\gamma_1 & := \sum\limits_{i,j=1}^m \int\limits_{K^3} \! 
			\rbras*{\E*{(D_{x_1,x_3}^2F_i)^4}\E*{(D_{x_2,x_3}^2 F_i)^4}\E*{(D_{x_1}F_j)^4}\E*{(D_{x_2}F_j)^4}}^{\frac{1}{4}} 
			\mu^3(\id (x_1, x_2, x_3))\\
		\gamma_2 & := \sum\limits_{i,j=1}^m \int\limits_{K^3} \! 
			\rbras*{\E*{(D_{x_1,x_3}^2F_i)^4}\E*{(D_{x_2,x_3}^2 F_i)^4}\E*{(D_{x_1,x_3}^2F_j)^4}\E*{(D_{x_2,x_3}^2 F_j)^4}}^{\frac{1}{4}} 
			\mu^3(\id (x_1, x_2, x_3))\\
		\gamma_3 & := \sum\limits_{i=1}^m \int\limits_{K}
			\Eabs*{D_xF_i}^3
			\mu(\id x)
	\end{align*}
	and let $\Sigma = (\sigma_{ij})_{i,j \in \cbras*{1, \ldots, m}} \in \RR^{m \times m}$ be positive semi-definite,
	then
	\begin{align*}
		\dist_3(F,N_\Sigma) \leq \frac{m}{2} \sum\limits_{i,j=1}^m \abs*{\sigma_{ij} - \Cov*{F_i}{F_j}} + m \sqrt{\gamma_1} + \frac{m}{2} \sqrt{\gamma_2} + \frac{m^2}{4} \gamma_3.
	\end{align*}
\end{theorem}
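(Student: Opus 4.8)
The plan is to derive the bound of Theorem~\ref{thm:bound-multi} from the multivariate normal approximation bound of Schulte and Yukich, \cite[Theorem 1.1]{SchulteYukich2018}, by (i) restricting the region of integration to $K$ and (ii) replacing the mixed moments appearing there with the fully factorized fourth moments that build $\gamma_1,\gamma_2,\gamma_3$. Recall that \cite[Theorem 1.1]{SchulteYukich2018} is the multivariate counterpart of \cite[Theorem 1.1]{LastPeccatiSchulte2016}: for a vector $F=(F_1,\ldots,F_m)$ of centered functionals in $\dom{D}$ and a positive semi-definite $\Sigma$, it bounds $\dist_3(F,N_\Sigma)$ by a covariance-discrepancy term of the type $\tfrac{1}{2}\sum_{i,j}\abs*{\sigma_{ij}-\Cov*{F_i}{F_j}}$ (up to a dimensional prefactor) plus integral functionals of $F$ whose integrands are expectations of products of squared first and second difference operators, such as $\E*{(D_{x_1,x_3}^2F_i)^2(D_{x_2,x_3}^2F_i)^2}$, $\E*{(D_{x_1}F_j)^2(D_{x_2}F_j)^2}$ and $\Eabs*{D_xF_i}^3$, with explicit combinatorial constants in $m$. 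After grouping like terms, the first two families of integrals are to be dominated by constant multiples of $\sqrt{\gamma_1}$ and $\sqrt{\gamma_2}$ and the last by a multiple of $\gamma_3$; matching the precise constants $\tfrac{m}{2}$, $m$, $\tfrac{m}{2}$, $\tfrac{m^2}{4}$ against the cited statement is then pure bookkeeping.

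The reduction from mixed moments to factorized fourth moments is Cauchy--Schwarz applied pointwise in the integration variables: for fixed $(x_1,x_2,x_3)$,
\begin{align*}
	\rbras*{\E*{(D_{x_1,x_3}^2F_i)^2(D_{x_2,x_3}^2F_i)^2}}^{1/2} \leq \rbras*{\E*{(D_{x_1,x_3}^2F_i)^4}}^{1/4}\rbras*{\E*{(D_{x_2,x_3}^2F_i)^4}}^{1/4},
\end{align*}
and the same with $D_{x_1}F_j,D_{x_2}F_j$ (respectively $D_{x_1,x_3}^2F_j,D_{x_2,x_3}^2F_j$) in place of the pair on the left; multiplying the two resulting estimates and summing over $i,j$ converts the mixed-moment integrand into the integrand of $\gamma_1$ (respectively $\gamma_2$), after which integration against $\mu^3$ is monotone. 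For the third term one uses the power-mean inequality $\rbras*{\E*{(D_xF_i)^2}}^{3/2}\leq\Eabs*{D_xF_i}^3$, so that any term of the form $\int\rbras*{\E*{(D_xF_i)^2}}^{3/2}\mu(\id x)$ is at most $\gamma_3$, while if $\Eabs*{D_xF_i}^3$ is already the integrand present in the cited bound nothing is needed here. Finally, the integrals in \cite[Theorem 1.1]{SchulteYukich2018} run over all of $\RR^d$; since in every application of this theorem the functionals $F_i$ depend only on $\eta\cap K$, the difference operators $D_xF_i$ and $D_{x_1,x_2}^2F_i$ vanish as soon as one argument leaves $K$, so the integrals reduce to $\int_{K}$ and $\int_{K^3}$, exactly as written in the statement, and the finiteness of all quantities involved follows from $F_i\in\dom{D}$.

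The only place where genuine care is required is in reading off the exact grouping of moments in \cite[Theorem 1.1]{SchulteYukich2018}: one must pair the squared second difference operators with the squared first difference operators to obtain the $\gamma_1$-type integrand, and with each other to obtain the $\gamma_2$-type integrand, so that the Cauchy--Schwarz splitting lands the four $L^4$-factors precisely as written; and one must check that enlarging each integrand leaves the prefactors untouched, so that the constants $\tfrac{m}{2}$ in front of the covariance discrepancy and $m,\tfrac{m}{2},\tfrac{m^2}{4}$ in front of $\sqrt{\gamma_1},\sqrt{\gamma_2},\gamma_3$ carry over verbatim. No estimate beyond Cauchy--Schwarz and Jensen is needed, and positive semi-definiteness of $\Sigma$ enters only where it does in \cite[Theorem 1.1]{SchulteYukich2018}, namely to ensure that $N_\Sigma\distribute\cN(0,\Sigma)$ is well defined.
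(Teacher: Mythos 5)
Your proposal is correct and matches what the paper does: Theorem \ref{thm:bound-multi} is presented there as a rephrasing of \cite[Theorem 1.1]{SchulteYukich2018}, obtained exactly by the pointwise Cauchy--Schwarz factorization of the mixed second/first-order moments into the $L^4$-factors of $\gamma_1$ and $\gamma_2$, keeping the $\Eabs*{D_xF_i}^3$ term as is, restricting the integrals to $K$ (the difference operators vanish off $K$), and carrying the constants $\tfrac{m}{2}$, $m$, $\tfrac{m}{2}$, $\tfrac{m^2}{4}$ over unchanged. No further comment is needed.
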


\section{Geometric preliminaries}\label{sec:geometric-prelim}

Fix $j \in \cbras*{1, \ldots, d}$ and let $G(d,j)$ denote the Grassmannian of all $j$-dimensional linear subspaces of $\RR^d$ equipped with the uniquely determined Haar probability measure $\nu_j$, see \cite[Section 4.4]{Schneider2014}. For $k \in \NNN$ the $k$-dimensional volume of the $k$-dimensional unit ball $\BB^k$ is denoted by $\kappa_k := \pi^{\frac{k}{2}}\Gamma\rbras*{1+\frac{k}{2}}^{-1}$.

For a convex body $K \in \cK^d$ the $j$-dimensional Lebesgue measure of the orthogonal projection of $K$ onto the linear subspace $\bbL \in G(d,j)$ is denoted by $\Lambda_j(K \vert \bbL)$.

For $j \in \cbras*{1,\ldots,d}$, the $j$-th intrinsic volume of $K$ is given by Kubota's formula, see \cite[p. 222]{SchneiderWeil2008}:
\begin{align}\label{eq:kubota}
	V_j(K) = \binom{d}{j} \frac{\kappa_d}{\kappa_j \kappa_{d-j}} \int\limits_{G(d,j)} \Lambda_j(K \vert \bbL) \nu_j(\id \bbL)
\end{align}
and for $j = 0$ the $0$-th intrinsic volume of $K$, $V_0(K)$ is the Euler characteristics of $K$, therefore we have $V_0(K) = \1\cbras*{K \neq \emptyset}$.
It is worth mentioning, that $V_1(K)$ is the mean with, $V_{d-1}(K)$ is the surface area up to multiplicative constants not depending on $K$ and $V_d(K)$ equals the $d$-dimensional Lebesgue-volume of $K$. The intrinsic volumes are crucial examples of continuous, motion invariant valuations:

\begin{definition}
	A real function on the space of convex bodies, $\varphi: \cK^d \rightarrow \RR$ , is called a valuation, if and only if
	\begin{align}
		\label{eq:valuation-property}
		\varphi(K) + \varphi(L) = \varphi(K \cup L) + \varphi(K \cap L)
	\end{align}
	holds, whenever $K, L, K \cup L \in \cK^d$.
	It is called continuous, if it is continuous according to the Hausdorff metric on $\cK^d$, and it is called invariant under rigid motions if it is invariant under translations and rotations on $\RR^d$.
\end{definition}

The theorem of Hadwiger \cite{Hadwiger1957, Klain1995, Chen2004} states, that every continuous and motion invariant valuation $\varphi: \cK^d \rightarrow \RR$ can be decomposed into a linear combination of intrinsic volumes:

\begin{theorem}[Hadwiger]\label{thm:hadwiger}
	Let $\varphi$ be a continuous and motion invariant valuation. 
	Then there existing coefficients $c_i \in \RR$, $i \in \cbras*{0, \ldots, d}$, such that for all convex sets $L \in \cK^d$ it holds, that 
	\begin{align}\label{eq:hadwiger}
		\varphi(L) = \sum\limits_{i = 0}^d c_i V_i(L),
	\end{align}
	where $V_i$ denotes the $i$-th dimensional intrinsic volume.
\end{theorem}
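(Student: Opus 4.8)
The natural route is Klain's proof of Hadwiger's characterisation: induction on the dimension $d$, with the crux being a structural statement about \emph{simple} valuations, i.e.\ those vanishing on every convex body of dimension at most $d-1$. For $d=0$ the space $\cK^0$ is a single point and $V_0\equiv 1$, so every valuation is a multiple of $V_0$; this anchors the induction. Assume the claim in all dimensions below $d$, and let $\varphi$ be continuous and motion invariant on $\cK^d$.

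\emph{Step 1: reduction to the simple case.} Fix a hyperplane $H\subset\RR^d$ through the origin. The restriction of $\varphi$ to the convex bodies contained in $H$ is a continuous, motion-invariant valuation on a copy of $\cK^{d-1}$, hence by the induction hypothesis it equals $\sum_{i=0}^{d-1}c_iV_i$ there, the $V_i$ being the intrinsic volumes computed inside $H$. Because the intrinsic volumes are genuinely intrinsic — $V_i(L)$ for $L$ contained in an affine subspace is the same whether computed in that subspace or in $\RR^d$, by the Steiner formula (equivalently, by the consistency of Kubota's formula \eqref{eq:kubota}) — and because every body of dimension $\le d-1$ can be moved into $H$ by a rigid motion, the valuation
\[
	\psi \,:=\, \varphi - \sum_{i=0}^{d-1} c_i V_i
\]
is continuous, motion invariant, and vanishes on all bodies of dimension $\le d-1$; that is, $\psi$ is simple. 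It now suffices to show that a continuous, motion-invariant, simple valuation $\psi$ on $\cK^d$ is a constant multiple of $V_d$.

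\emph{Step 2: the key lemma.} For a convex body $K$ in $H$ and $h>0$, consider the right prism $K\times[0,h]$. The map $K\mapsto\psi(K\times[0,1])$ is a continuous, motion-invariant valuation on $\cK^{d-1}$ that vanishes on every body of dimension $\le d-2$ (its prism is $(d-1)$-dimensional, hence killed by $\psi$), so by the induction hypothesis, the lower-order coefficients being forced to zero, it equals $c\,V_{d-1}(K)$ for a constant $c$. Cutting $K\times[0,h_1+h_2]$ along $K\times\{h_1\}$ and invoking the valuation identity \eqref{eq:valuation-property} together with translation invariance shows that $h\mapsto\psi(K\times[0,h])$ is additive and continuous, hence linear, so $\psi(K\times[0,h])=c\,V_d(K\times[0,h])$ for all $K$ and all $h$. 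Replacing $\psi$ by $\psi-c\,V_d$, we may assume $\psi$ vanishes on all right prisms over $(d-1)$-bodies, and we must deduce $\psi\equiv0$. For this one dissects prisms over simplices into finitely many rigid-motion copies of a single simplex (and conversely builds prisms out of such simplices), so that motion invariance and inclusion-exclusion force $\psi$ to vanish on every simplex, then on every polytope by triangulation, and finally on all of $\cK^d$ by Hausdorff continuity and the density of polytopes. Adding back the subtracted terms yields $\varphi=\sum_{i=0}^d c_iV_i$.

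\emph{Main obstacle.} The difficulty is concentrated in the last part of Step 2 — proving that a continuous, motion-invariant, simple valuation which vanishes on right prisms must vanish identically. This is exactly where the full rotation group is used, through the delicate cut-and-paste dissections (Klain's splitting of a prism into congruent simplices, or Chen's streamlined variant), and it rests on Klain's classification of \emph{even} simple continuous translation-invariant valuations as multiples of volume, together with the observation that the odd part of $\psi$ contributes nothing — again a consequence of rotation invariance. Lesser but genuine points requiring care are the rigorous justification of the hyperplane reduction via the ambient-independence of the $V_i$, and the passage from polytopes to arbitrary convex bodies by continuity. Uniqueness of the coefficients $c_i$, though not part of the statement, follows a posteriori from the linear independence of $V_0,\ldots,V_d$, visible by evaluating on Euclidean balls of varying radii, where $V_i$ scales like $r^i$.
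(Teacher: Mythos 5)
The paper does not prove Theorem \ref{thm:hadwiger} at all: it is imported as a classical result with references to \cite{Hadwiger1957}, \cite{Klain1995} and \cite{Chen2004}, so there is no in-paper argument to compare against. Your outline follows exactly the route of those references (Klain's induction on dimension). Step 1 and the first half of Step 2 are sound: the restriction to a hyperplane, the ambient-independence of the intrinsic volumes, the reduction to a simple valuation $\psi$, the identification $\psi(K\times[0,h])=c\,V_d(K\times[0,h])$ via the induction hypothesis, the Cauchy functional equation in $h$, and continuity are all correctly argued.

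However, as a standalone proof the proposal has a genuine gap precisely where you locate the ``main obstacle'': the decisive claim that a continuous, motion-invariant, simple valuation vanishing on right prisms vanishes identically is described and attributed, not proved. Two points in your description would not survive being made precise as stated. First, a prism over a simplex cannot in general be dissected into ``finitely many rigid-motion copies of a single simplex''; the classical dissection of a triangular prism yields three tetrahedra of equal volume that are not congruent, and the actual arguments of Hadwiger, Klain and Chen are more delicate, combining cut-and-paste identities with limiting and continuity arguments, or (in Klain's version) invoking his theorem that every even, simple, continuous, translation-invariant valuation is a multiple of volume --- itself a nontrivial result that your sketch would have to either prove or cite as an external black box. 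Second, dismissing the odd part of $\psi$ ``as a consequence of rotation invariance'' is immediate only when $d$ is even, since then $-\mathrm{id}\in SO(d)$; for odd $d$ the point reflection is improper, and with the paper's definition of motion invariance (translations and rotations only) one needs an extra argument, e.g.\ the classical fact that every simplex can be dissected into mirror-symmetric pieces, so that simplicity and invariance under proper motions already force $\psi(\Delta)=\psi(\tilde\Delta)$ for a simplex and its mirror image. So your proposal is a correct roadmap of the cited proofs, but the core lemma is asserted rather than established, and the two details above need repair before the argument closes.
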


For further information on Hadwiger's theorem, convex geometry and integral geometry we refer the reader to \cite{Gruber2007, SchneiderWeil2008, Schneider2014}.


Let $P \in \cK^d$ be a polytope and $i \in \cbras*{0, \ldots, d}$. 
We denote by $\cF_i(P)$ the set of all $i$-dimensional faces, $i$-faces for short, of $P$ and by $\cF_i^{\Vis}(P,x)$ the restriction to those $i$-faces that can be seen from $x$, where we consider a face $\frF$ of $P$ to be seen by $x$ if all points $z \in \frF$ can be connected by a straight line $[z,x]$ to $x$ such that the intersection of this line with $P$ only contains the starting point $z$, i.e.
\begin{align*}
	\cF_i^{\Vis}(P,x) := \cbras*{\frF \in \cF_i(P) : \forall z \in \frF : [x,z] \cap P = \cbras*{z}}.
\end{align*}
The sets of all faces resp. all visible faces are denoted by $\cF(P) := \bigcup_{i=0}^d \cF_i(P)$ resp. $\cF^{\Vis}(P,x) := \bigcup_{i=0}^d \cF_i^{\Vis}(P,x)$.

For a vertex $v \in \cF_0(P)$ the link of $v$ in $P$ is the set of all faces of $P$ that do not contain $v$ but are contained in a (higher dimensional) face that contains $v$, i.e.
\begin{align*}
	\link(P, v) := \cbras*{\frF \in \cF(P) : v \not\in \frF \text{ and } \exists \frG \in \cF(P) : \frF \subset \frG \ni v},
\end{align*}
see \cite[Chapter 8.1]{Ziegler1995} for a recent account of the theory.

The number of $i$-dimensional faces of $P$ will be denoted by $f_i(P)$, i.e.
\begin{align*}
	f_i(P) = \abs{\cF_i(P)}.
\end{align*}
Note that the vector $(f_{-1}(P), f_0(P), \ldots, f_d(P))$ with $f_{-1}(P) := V_0(P)$ is the $\bff$-vector of $P$, see \cite[Definition 8.16, p. 245]{Ziegler1995} for more details.

\subsection{Geometric estimations}

We introduce the notion of the $\varepsilon$-floating body, following \cite[Section 2.2.3]{Reitzner2010}.
For a fixed $K \in \cK^d$ and a closed halfspace $H$ we call the intersection $C = H \cap K$ a cap of $K$. If $C$ has volume $V_d(C) = \varepsilon$, we call $C$ an $\varepsilon$-cap of $K$.
We define the function $v: K \rightarrow \RR$ by
\begin{align*}
	v(z) = \min\cbras*{V_d(K \cap H) : H \text{ is a halfspace with } z \in H},
\end{align*}
and the floating body with parameter $\varepsilon$, $\varepsilon$-floating body for short, as the level set
\begin{align*}
	K(v \geq \varepsilon) = \cbras*{z \in K : v(z) \geq \varepsilon},
\end{align*}
which is convex, since it is the intersection of halfspaces.
The wet part of $K$ is defined as $K(v \leq \varepsilon)$, where the name comes from the $3$-dimensional picture when $K$ is a box containing $\varepsilon$ units of water.
Note that the $\varepsilon$-floating body is (up to its boundary) the remaining set of $K$, if all $\varepsilon$-caps are removed from $K$ and the wet part is the union of these caps.
For the convenience of the reader, we will only use the notation for the floating body $K(v \geq \varepsilon)$ to prevent confusion with the wet part denoted by $K(v \leq \varepsilon)$.
From now on, we will assume that the parameter $\varepsilon > 0$ is sufficiently small. Thus we can use the following lemmas from \cite[Lemma 6.1-6.3]{Vu2005}:

\begin{lemma}\label{lem:geo:diam}
	Let $C$ be an $\varepsilon$-cap of $K$, then there are two constants $c_1, c_2 \in (0,\infty)$ such that the diameter of $C$, $\diameter(C) = \sup_{x,y \in C}\norm{x-y}$, is bounded by
	\begin{align*}
		c_1 \varepsilon^{\frac{1}{d+1}} \leq \diameter(C) \leq c_2 \varepsilon^{\frac{1}{d+1}}.
	\end{align*}
\end{lemma}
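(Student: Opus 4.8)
The plan is to reduce the statement to a local computation in a coordinate frame adapted to the cap, exploiting the fact that $K \in \cK^d_{sm}$ has $\cC^2$ boundary with positive Gaussian curvature everywhere. Let $C = H \cap K$ be an $\varepsilon$-cap, with $H = \{x : \langle x, u\rangle \geq h\}$ for some unit vector $u$ and height $h$, and let $H_0 = \{x : \langle x, u\rangle = h\}$ be the cutting hyperplane. First I would fix a boundary point $x_0 \in \partial K$ lying ``deepest'' inside the cap in the direction $u$, i.e. a point maximizing $\langle x, u\rangle$ over $K$; set up coordinates so that $x_0$ is the origin, $u$ is the negative last coordinate direction, and the tangent hyperplane to $\partial K$ at $x_0$ is horizontal. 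In this frame $\partial K$ is locally the graph of a $\cC^2$ convex function $g$ with $g(0) = 0$, $\nabla g(0) = 0$, and Hessian $D^2 g(0)$ positive definite; the cap $C$ is (essentially) the region $\{(y, s) : g(y) \leq s \leq w\}$ where $w$ is the distance from $x_0$ to $H_0$. The key geometric quantities are: the ``width'' $w$ of the cap in the $u$-direction, the diameter $r$ of the cap in the directions orthogonal to $u$, and the volume $\varepsilon$.

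The heart of the argument is a two-sided comparison. Since $D^2 g(0)$ is positive definite and $g \in \cC^2$, for $y$ in a fixed small neighborhood there are constants $0 < \lambda \leq \Lambda < \infty$, uniform over all $x_0 \in \partial K$ by compactness of $K$ and continuity of the curvatures, with $\tfrac{\lambda}{2}\norm{y}^2 \leq g(y) \leq \tfrac{\Lambda}{2}\norm{y}^2$. From the upper bound on $g$, the cap contains the paraboloidal region $\{(y,s) : \tfrac{\Lambda}{2}\norm{y}^2 \leq s \leq w\}$ of width $w$, which has volume comparable to $w^{(d+1)/2}$; from the lower bound on $g$, the cap is contained in the analogous region with $\lambda$, also of volume comparable to $w^{(d+1)/2}$. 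Hence $\varepsilon \asymp w^{(d+1)/2}$, i.e. $w \asymp \varepsilon^{2/(d+1)}$, with constants depending only on $d$, $\lambda$, $\Lambda$. Likewise the transverse extent satisfies $r \asymp \sqrt{w}$ (the base of the cap has radius of order $\sqrt{w/\lambda}$ from above and $\sqrt{w/\Lambda}$ from below). The diameter of $C$ is then comparable to $\max\{w, r\} \asymp \sqrt{w} \asymp \varepsilon^{1/(d+1)}$, which is exactly the claimed bound $c_1 \varepsilon^{1/(d+1)} \leq \diameter(C) \leq c_2 \varepsilon^{1/(d+1)}$.

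The main obstacle is making the local quadratic comparison uniform over all caps simultaneously, and in particular handling the possibility that the cutting direction $u$ is not close to the inward normal at the deepest point (in which case the ``deepest point'' picture still works but one must be careful that the tangent hyperplane at $x_0$ is $H_0$-parallel up to an error controlled by $w$). One clean way around this is to note that for $\varepsilon$ small the cap is forced to be small — a standard compactness argument shows $\diameter(C) \to 0$ as $\varepsilon \to 0$ uniformly in $u$ — so that $u$ is automatically within $o(1)$ of the outer normal at $x_0$, and then the quadratic approximation of the $\cC^2$ graph is valid on the relevant scale with uniform constants. Once this uniformity is in hand the volume estimate is a routine change of variables $y \mapsto w^{1/2} z$ in the paraboloidal integral. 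I would cite this explicitly as the content of \cite[Lemma 6.1--6.3]{Vu2005} rather than reproving it, since the excerpt already invokes that reference; the sketch above is the shape of the proof there.
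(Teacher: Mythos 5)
Your proposal is correct and takes the same approach as the paper: the paper gives no proof of this lemma but cites it directly from \cite[Lemma 6.1--6.3]{Vu2005}, as you recognized, and your osculating-paraboloid sketch (sandwiching the cap between $\tfrac{\lambda}{2}\norm{y}^2$ and $\tfrac{\Lambda}{2}\norm{y}^2$ graphs to get $\varepsilon \asymp w^{(d+1)/2}$ and $\diameter(C) \asymp \sqrt{w}$) is the right shape of the argument in that reference. One small simplification: since $K$ is smooth with everywhere positive Gaussian curvature, the point $x_0$ maximizing $\langle x,u\rangle$ over $K$ has outer unit normal exactly $u$, so the supporting hyperplane at $x_0$ is \emph{exactly} parallel to the cutting hyperplane $H_0$ and there is no angle error to control, which removes the obstacle you flag in your final paragraph.
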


\begin{lemma}\label{lem:geo:vol}
	Let $x$ be a point on the boundary $\partial K$ of $K$ and $D(x,\varepsilon)$ the set of all points on the boundary which are of distance at most $\varepsilon$ to $x$. Then the convex hull of $D(x,\varepsilon)$ has volume at most $c_3 \varepsilon^{d+1}$, where $c_3 \in (0,\infty)$ is some constant not depending on $\varepsilon$.
\end{lemma}

\begin{lemma}\label{lem:geo:union}
	Let $C$ be an $\varepsilon$-cap of $K$. The union of all $\varepsilon$-caps intersecting $C$ has volume at most $c_4 \varepsilon$, where $c_4 \in (0,\infty)$ is some constant not depending on $\varepsilon$.
\end{lemma}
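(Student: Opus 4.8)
The plan is to localize the union, reduce each cap to a surface cap of $K$, and then quote Lemma~\ref{lem:geo:vol}; throughout, $c_2$ and $c_3$ denote the constants from Lemmas~\ref{lem:geo:diam} and \ref{lem:geo:vol}.

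First I would fix a point $x \in C \cap \partial K$. This is possible because the $\varepsilon$-cap $C = K \cap H$ has $V_d(C) = \varepsilon \in (0, V_d(K))$ for small $\varepsilon$, so $C \subsetneq K$, and any maximizer over $K$ of the linear functional cutting out $H$ lies in $C \cap \partial K$. Now let $C' = K \cap H'$ be an arbitrary $\varepsilon$-cap with $C' \cap C \neq \emptyset$, and choose $y \in C' \cap C$. For every $z \in C'$, the triangle inequality together with Lemma~\ref{lem:geo:diam} (applied once to $C'$ and once to $C$) gives
\begin{align*}
	\norm{z-x} \leq \norm{z-y} + \norm{y-x} \leq \diameter(C') + \diameter(C) \leq 2c_2\, \varepsilon^{\frac{1}{d+1}} =: \rho .
\end{align*}
Hence each such $C'$, and therefore the union $U$ of all of them, is contained in the closed ball of radius $\rho$ about $x$.

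Next I would observe that any cap $C' = K \cap H'$ equals $\conv(C' \cap \partial K)$, i.e. it is the convex hull of its ``dome.'' Being a convex body, $C'$ is the convex hull of its extreme points, so it suffices to check that every extreme point of $C'$ lies in $C' \cap \partial K$. An extreme point $e$ in the interior of $K$ would have to lie on the cutting hyperplane $\partial H'$ (otherwise a full neighbourhood of $e$ in $K$ would lie in $C'$, contradicting extremality); but then $e$ would lie in the relative boundary of the $(d-1)$-dimensional convex set $K \cap \partial H'$ (it is $(d-1)$-dimensional since $0 < V_d(C') < V_d(K)$ forces $\partial H'$ to meet the interior of $K$), and that relative boundary is contained in $\partial K$ — a contradiction. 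So every extreme point of $C'$ lies on $\partial K$, and since it also lies in $H'$ it belongs to $C' \cap \partial K$. Combining with the ball bound above, $C' \cap \partial K \subseteq \cbras*{z \in \partial K : \norm{z-x} \leq \rho} = D(x,\rho)$, so $C' \subseteq \conv(D(x,\rho))$, and taking the union over all admissible $C'$ yields $U \subseteq \conv(D(x,\rho))$.

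Finally, since $\rho = 2c_2\,\varepsilon^{\frac{1}{d+1}} \to 0$ as $\varepsilon \to 0$, the parameter $\rho$ is as small as required and Lemma~\ref{lem:geo:vol} applies with radius $\rho$, giving
\begin{align*}
	V_d(U) \leq V_d\rbras*{\conv(D(x,\rho))} \leq c_3\, \rho^{d+1} = c_3 (2c_2)^{d+1}\, \varepsilon ,
\end{align*}
so the assertion holds with $c_4 := c_3 (2c_2)^{d+1}$, which depends only on $K$ (through $c_2, c_3$) and not on $\varepsilon$. I expect the only genuinely delicate point to be the cap representation $C' = \conv(C' \cap \partial K)$; the localization step and the final estimate are just the triangle inequality and the two quoted lemmas, with smoothness of $K$ entering only inside those lemmas.
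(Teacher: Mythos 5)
Your argument is correct. Note that the paper does not actually prove this statement: it is one of the three geometric lemmas quoted without proof from Vu (Lemmas 6.1--6.3 there), so there is no in-paper proof to compare against. What you give is a clean self-contained derivation from the other two quoted lemmas, and it is exactly the argument the paper implicitly relies on elsewhere (compare the treatment of the visibility region in the proof of Theorem \ref{thm:univariate}, where a set of small diameter touching the boundary is enclosed in $\conv(D(z,\rho))$ and Lemma \ref{lem:geo:vol} is invoked): the triangle-inequality localization via Lemma \ref{lem:geo:diam} puts every $\varepsilon$-cap meeting $C$ inside the ball of radius $\rho=2c_2\varepsilon^{1/(d+1)}$ about a boundary point $x$ of $C$, the observation that a cap is the convex hull of its boundary dome (your extreme-point argument for this is sound, including the reduction to the relative boundary of $K\cap\partial H'$, which lies in $\partial K$) upgrades this to containment of the whole union in $\conv(D(x,\rho))$, and Lemma \ref{lem:geo:vol} then gives the bound $c_3(2c_2)^{d+1}\varepsilon$. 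Your final constant and the applicability of Lemma \ref{lem:geo:vol} for small $\varepsilon$ (since $\rho\to 0$) are handled correctly, so the proposal fills the quoted gap in the intended way rather than taking a genuinely different route.
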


\section{Proofs of the main results}

To shorten our notation we write $K^x_t$ resp. $K^y_t$ for the convex hull of $(\eta_t + \delta_x) \cap K$ resp. $(\eta_t + \delta_y) \cap K$ and $K^{xy}_t$ for the convex hull of $(\eta_t + \delta_x + \delta_y) \cap K$.
Further we will use $\bC \in (0,\infty)$ to denote a constant, that can depend on the dimension and the convex set $K$ but is independent of the intensity of our Poisson point process $t$. For sake of brevity we will not mention this properties of $\bC$ in the following, additionally the value of $\bC$ may also differ from line to line.
We will use $g(t) \ll f(t)$ to indicate that $g(t)$ is of order at most $f(t)$, i.e.
\begin{align*}
	g(t) \ll f(t) & :\Leftrightarrow g(t) = \cO\rbras*{f(t)}\\
		&  \phantom{:}\Leftrightarrow \exists c > 0, t_0 > 0 : \forall t > t_0 : g(t) \leq c f(t),
\end{align*}
where $c$ and $t_0$ are constants not depending on $t$. We will use $g(t) = \Theta(f(t))$ to indicate that $g(t)$ is of the same order of $f(t)$, i.e.
\begin{align*}
	g(t) = \Theta(f(t))  :\Leftrightarrow f(t) = \cO\rbras*{g(t)} \text{~and~} g(t) = \cO\rbras*{f(t)}.
\end{align*}

For sufficiently large $t > 0$, we define $\varepsilon_t := c \tfrac{\log(t)}{t}$ with $c > 0$ and denote by $K(v \geq \varepsilon_t)$ the $\varepsilon_t$-floating body of $K$.
Let $A(\varepsilon_t, t) := \cbras*{K(v \geq \varepsilon_t) \subseteq K_t}$ be the event, that the $\varepsilon_t$-floating body is contained in the random polytope $K_t$. Recall the well known Lemma from \cites{BaranyDalla1997}{Vu2005}[Lemma 2.2]{Reitzner2010} in a slightly modified version for the Poisson case:

\begin{lemma}\label{lem:floating-body-contained}
	For any $\beta \in (0,\infty)$ there exists a constant $c(\beta, d) \in (0,\infty)$ only depending on $\beta$ and the space dimension $d$, such that the event $A(\varepsilon_t,t)$, that the $\varepsilon_t$-floating body is contained in the random polytope $K_t$ occurs with high probability. More precisely, the probability of the complementary event $A^c(\varepsilon_t, t)$ has polynomial decay with exponent $-\beta$ for $t \rightarrow \infty$, i.e.
	\begin{align*}
		\Prob*{A^c(\varepsilon_t, t)} < \bC t^{-\beta}.
	\end{align*}
	whenever $t$ is sufficiently large.
\end{lemma}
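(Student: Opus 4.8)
The plan is to estimate $\Prob*{A^c(\varepsilon_t,t)}$ directly via a covering (economic cap) argument, exactly in the spirit of Bárány--Dalla and Reitzner, adapted to the Poisson setting where the number of points in a region is Poisson-distributed rather than binomial. The event $A^c(\varepsilon_t,t)$ says that $K(v\geq\varepsilon_t)\not\subseteq K_t$, i.e.\ there is a point $z$ in the floating body that is separated from $\eta_t\cap K$ by some hyperplane; such a point lies in a cap $C$ of $K$ with $C\cap\eta_t=\emptyset$ and, because $z\in K(v\geq\varepsilon_t)$, this cap can be taken to have volume at least $\varepsilon_t$ (one picks the minimal cap at $z$). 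So $A^c(\varepsilon_t,t)$ is contained in the event that some $\varepsilon_t$-cap of $K$ contains no point of $\eta_t$.

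Next I would pass from the uncountable family of all $\varepsilon_t$-caps to a finite economic subfamily. Using the standard construction (a maximal set of points on $\partial K$ that are pairwise of distance $\asymp\varepsilon_t^{1/(d+1)}$, with the minimal caps at these points, scaled down by a fixed dimensional factor), one obtains finitely many caps $C_1,\dots,C_N$ such that every $\varepsilon_t$-cap of $K$ contains at least one $C_\ell$, and each $C_\ell$ has volume $\Theta(\varepsilon_t)$; Lemmas~\ref{lem:geo:diam} and \ref{lem:geo:vol} are what control the diameters and the volume of these cap-hulls, so that the count satisfies $N\ll\varepsilon_t^{-d/(d+1)}\ll t^{d/(d+1)}$, which is polynomial in $t$. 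If some $\varepsilon_t$-cap misses $\eta_t$, then one of the $C_\ell$ misses $\eta_t$ too, hence
\begin{align*}
	\Prob*{A^c(\varepsilon_t,t)} \leq \sum_{\ell=1}^N \Prob*{\eta_t\cap C_\ell=\emptyset} = \sum_{\ell=1}^N e^{-t\,V_d(C_\ell)} \leq N e^{-\bC t\varepsilon_t},
\end{align*}
using that $\eta_t$ is Poisson with intensity $t\Lambda_d$ and $V_d(C_\ell)\geq\bC\varepsilon_t$. Now substitute $\varepsilon_t=c\log(t)/t$: the exponential becomes $e^{-\bC c\log t}=t^{-\bC c}$, and together with the polynomial factor $N\ll t^{d/(d+1)}$ we get $\Prob*{A^c(\varepsilon_t,t)}\ll t^{d/(d+1)-\bC c}$. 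Choosing the constant $c$ in the definition of $\varepsilon_t$ large enough (depending on $\beta$ and $d$) forces the exponent below $-\beta$, which gives the claim.

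The main obstacle — really the only non-formulaic point — is the construction and counting of the economic cap family and the verification that every $\varepsilon_t$-cap swallows one of the chosen caps up to a fixed fraction of its volume; this is where Lemmas~\ref{lem:geo:diam}–\ref{lem:geo:union} and the smoothness of $K$ (bounded curvature, so caps are comparable to ellipsoidal caps) enter, and where one must be careful that the implied constants are uniform over all sufficiently small $\varepsilon$. Everything after that — the union bound, the Poisson void probability, and the choice of $c$ — is routine. Since this covering lemma is classical (see the cited works of Bárány--Dalla, Vu, and Reitzner), I would state the construction briefly and refer to those sources for the geometric details, emphasizing only the one change needed for the Poisson model, namely replacing the binomial estimate $\Prob*{\text{$n$ i.i.d.\ points miss }C}=(1-V_d(C)/V_d(K))^n$ by the cleaner $\Prob*{\eta_t\cap C=\emptyset}=e^{-tV_d(C)}$.
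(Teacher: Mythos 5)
Your proposal is correct and reproduces exactly the argument the paper defers to by citing B\'ar\'any--Dalla, Vu, and Reitzner: reduce $A^c(\varepsilon_t,t)$ to the event that some cap of volume at least $\varepsilon_t$ is empty of Poisson points, pass to a polynomial-size economic family of caps of volume $\Theta(\varepsilon_t)$, replace the binomial void probability by the Poisson one $e^{-tV_d(C_\ell)}$, and choose $c$ in $\varepsilon_t = c\log(t)/t$ large enough to dominate the polynomial factor. (Your count $N \ll \varepsilon_t^{-d/(d+1)}$ slightly overstates the standard $\varepsilon_t^{-(d-1)/(d+1)}$, but it remains a valid upper bound and does not affect the conclusion.)
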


Note that the parameter $\beta$ can be freely chosen in $(0, \infty)$, thus for $\beta = 16d+1$, which is sufficiently large for all our purposes, we get $c(\beta, d)$ and therefore we can define $\varepsilon_t$ such that $K(v \geq \varepsilon_t) \subseteq K_t$ with high probability according to Lemma \ref{lem:floating-body-contained}.

We will use the following estimation of subsets of $G(d,j)$ from \cite[Lemma 1]{BaranyFodorVigh2010} to handle the projections arising from  Kubota's formula in our proof of Theorem \ref{thm:univariate}:

\begin{lemma}\label{lem:measure-of-angle-set}
	For $z \in \bbS^{d-1}$ and $\bbL \in G(d,j)$ we define the angle $\sphericalangle(z,\bbL)$ as the minimum of all angles $\sphericalangle(z,x)$, $x \in \bbL$.
	For sufficiently small $\alpha > 0$ one has that
	\begin{align*}
		\nu_j\rbras*{\cbras*{\bbL \in G(d,j) : \sphericalangle(z,\bbL) \leq \alpha}} = \Theta\rbras*{a^{d-j}}.
	\end{align*}
\end{lemma}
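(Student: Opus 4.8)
\emph{Proof proposal.} The plan is to translate the statement about the Grassmannian into a statement about the lower tail of a Beta random variable via orthogonal projections. First I would record the clean description $\cos \sphericalangle(z, \bbL) = \norm{P_\bbL z}$, where $P_\bbL$ denotes orthogonal projection onto $\bbL$: the vector realising the minimum in the definition of $\sphericalangle(z,\bbL)$ is $P_\bbL z$ itself, and $\langle z, P_\bbL z\rangle = \norm{P_\bbL z}^2$. Since $\norm{P_\bbL z}^2 + \norm{P_{\bbL^\perp} z}^2 = \norm{z}^2 = 1$, for sufficiently small $\alpha$ the event $\cbras*{\sphericalangle(z,\bbL) \leq \alpha}$ coincides with $\cbras*{\norm{P_{\bbL^\perp} z} \leq \sin\alpha}$. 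The orthocomplement map $\bbL \mapsto \bbL^\perp$ is an $O(d)$-equivariant bijection $G(d,j) \to G(d,d-j)$, so by uniqueness of the invariant probability measure it pushes $\nu_j$ forward to $\nu_{d-j}$; hence it suffices to estimate $\nu_{d-j}\rbras*{\cbras*{M \in G(d,d-j) : \norm{P_M z} \leq \sin\alpha}}$.

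Next I would realise a $\nu_{d-j}$-distributed subspace as $M = g M_0$ with $g$ Haar-distributed on $O(d)$ and $M_0$ the span of the first $d-j$ coordinate vectors; then $\norm{P_M z}^2 = \norm{P_{M_0} g^{-1} z}^2$ is the sum of the squares of the first $d-j$ coordinates of the uniformly distributed random unit vector $g^{-1} z \in \bbS^{d-1}$. Writing that unit vector as $X/\norm{X}$ with $X$ a standard Gaussian vector in $\RR^d$ identifies $\norm{P_M z}^2$ as a Beta random variable with parameters $(d-j)/2$ and $j/2$ (a ratio of independent chi-square variables with $d-j$ and $j$ degrees of freedom). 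Its density is proportional to $x^{(d-j)/2-1}(1-x)^{j/2-1}$ on $(0,1)$; for $s \leq 1/2$ the factor $(1-x)^{j/2-1}$ is bounded above and below by positive constants on $[0,s]$, so $\Prob*{\norm{P_M z}^2 \leq s} = \Theta\rbras*{s^{(d-j)/2}}$ as $s \to 0$. Taking $s = \sin^2\alpha = \Theta(\alpha^2)$ yields $\nu_j\rbras*{\cbras*{\bbL : \sphericalangle(z,\bbL) \leq \alpha}} = \Theta\rbras*{(\sin\alpha)^{d-j}} = \Theta\rbras*{\alpha^{d-j}}$, which is the claim; the degenerate case $j = d$ is trivial since then $\bbL = \RR^d \ni z$ always.

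There is no serious obstacle here; the points requiring care are the identity $\cos\sphericalangle(z,\bbL) = \norm{P_\bbL z}$ (checking that the minimiser is the projection), the invariance argument showing $\bbL \mapsto \bbL^\perp$ transports $\nu_j$ to $\nu_{d-j}$, and the standard but slightly technical identification of $\sum_{i \leq d-j} w_i^2$, $w$ uniform on $\bbS^{d-1}$, with the stated Beta law together with the elementary two-sided bound on its lower tail. One could instead argue purely geometrically --- the set in question consists of those $\bbL$ meeting the spherical cap of radius $\alpha$ about $z$, and its $\nu_j$-measure can be estimated directly by a covering/packing argument in the $d-j$ remaining directions --- but that route is messier to make rigorous, so I would present the projection argument as the main proof.
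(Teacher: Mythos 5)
Your argument is correct, but note that the paper itself contains no proof of this lemma to compare against: it is imported verbatim as \cite[Lemma 1]{BaranyFodorVigh2010}, and is only ever used as a black box in the proof of Theorem \ref{thm:univariate} (also, the $a^{d-j}$ in the statement is a typo for $\alpha^{d-j}$, which you have implicitly and correctly repaired). Your proposal is therefore a genuine, self-contained substitute for the citation, and each step checks out: $\cos\sphericalangle(z,\bbL)=\norm{P_{\bbL}z}$ with the minimiser being the projection; the equivalence $\sphericalangle(z,\bbL)\le\alpha \Leftrightarrow \norm{P_{\bbL^{\perp}}z}\le\sin\alpha$ (valid for all $\alpha\in[0,\pi/2]$ since $\sphericalangle(z,\bbL)\in[0,\pi/2]$); the $O(d)$-equivariance of $\bbL\mapsto\bbL^{\perp}$ plus uniqueness of the invariant probability measure, giving the pushforward $\nu_{d-j}$; the identification of $\norm{P_{M}z}^{2}$, for $M$ distributed according to $\nu_{d-j}$, with a $\mathrm{Beta}\bigl(\tfrac{d-j}{2},\tfrac{j}{2}\bigr)$ variable via the Gaussian representation of a uniform point on $\bbS^{d-1}$; and the two-sided lower-tail bound $\Theta\bigl(s^{(d-j)/2}\bigr)$ with $s=\sin^{2}\alpha=\Theta(\alpha^{2})$, where the factor $(1-x)^{j/2-1}$ is indeed bounded above and below on $[0,1/2]$ for every $j\ge 1$ (including $j=1$, where it blows up only near $x=1$). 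The implied constants depend only on $d$ and $j$, which is all that the application in the paper requires, and your remark on the trivial case $j=d$ covers the remaining boundary case. In short: correct and complete, just be aware you are proving a quoted auxiliary result rather than reproducing an argument the paper itself supplies.
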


\subsection{Proof of Theorem \ref{thm:univariate}: valuation functional}

We first recall that the valuation functional $\varphi(K_t)$ can be decomposed with Hadwiger \eqref{eq:hadwiger} into the linear combination of intrinsic volumes, thus the variance $\V*{\varphi(K_t)}$ can be rewritten into
\begin{align*}
	\V*{\varphi(K_t)} = \sum\limits_{i = 0}^d c_i^2 \V*{V_i(K_t)} + 2 \sum\limits_{i=0}^d \sum\limits_{j = i+1}^d c_i c_j \Cov*{V_i(K_t)}{V_j(K_t)}.
\end{align*}

For $V_i$, $i \in \cbras*{1,\ldots,d}$ we will use the variance bound from \cite[eq. 5.20, eq. 5.22, eq. 5.23]{LachiezeReySchulteYukich2017}, see also \cite[Corollary 7.1]{CalkaSchreiberYukich2013} and \cite{Reitzner2005a}:
\begin{align}\label{eq:intrinsic:variance}
	t^{-1-\frac{2}{d+1}} \ll \V*{V_i(K_t)} \ll t^{-1-\frac{2}{d+1}}.
\end{align}

Since $V_0(K_t)$ is the Euler characteristics of $K_t$ we have $V_0(K_t) = \1\cbras*{K_t \neq \emptyset}$ and therefore $V_0(K_t)$ is a Bernoulli distributed random variable with success probability $\Prob*{V_0(K_t) = 1} = 1 - e^{-t \Lambda_d(K)}$. The expectation is given by $\E*{V_0(K_T)} = 1 - e^{-t \Lambda_d(K)}$ and the variance by $\V*{V_0(K_t)} = (1 - e^{-t \Lambda_d(K)})e^{-t \Lambda_d(K)}$, which can be bounded by
\begin{align}\label{eq:0-th:variance}
	0 \leq \V*{V_0(K_t)} \ll e^{-t \Lambda_d(K)} \ll t^{-1-\frac{2}{d+1}}.
\end{align}

\begin{lemma}\label{lem:intrinsic-volumes-covariances}
	For all $i,j \in \cbras*{0, \ldots, d}$ the intrinsic volumes of $K_t$ are non negatively correlated and their covariances are bounded from above by the same order of magnitude as the variance, i.e. 
	\begin{align}\label{eq:intrinsic-volumes-covariances}
		0 \leq \Cov*{V_i(K_t)}{V_j(K_t)} \ll t^{-1-\frac{2}{d+1}}.
	\end{align}
\end{lemma}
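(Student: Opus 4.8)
The plan is to establish non-negativity of the covariances via a FKG/positive-association-type argument and then to bound them from above by the variances using Cauchy--Schwarz. For the upper bound, the key observation is that $\Cov*{V_i(K_t)}{V_j(K_t)} \leq \sqrt{\V*{V_i(K_t)} \V*{V_j(K_t)}}$, and each factor on the right is $\Theta(t^{-1-\frac{2}{d+1}})$ by \eqref{eq:intrinsic:variance} for $i,j \in \cbras*{1,\ldots,d}$ and by \eqref{eq:0-th:variance} for the index $0$. Hence the geometric mean is also $\ll t^{-1-\frac{2}{d+1}}$, giving the stated upper bound without any further work. (One could alternatively use $2\Cov \leq \V*{V_i(K_t)} + \V*{V_j(K_t)}$, which is even more elementary and avoids square roots entirely.)

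The substantive part is the lower bound $\Cov*{V_i(K_t)}{V_j(K_t)} \geq 0$. The intrinsic volumes $V_i$ are monotone under inclusion, i.e. $L \subseteq L'$ implies $V_i(L) \leq V_i(L')$, and $K_t$ is monotone increasing in the point configuration: adding points to $\eta_t \cap K$ can only enlarge the convex hull. Thus each $V_i(K_t)$ is an increasing functional of the Poisson process $\eta_t$ (in the sense of the natural partial order on counting measures). I would then invoke the Harris--FKG inequality for Poisson point processes — see e.g. \cite{LastPenrose2018} — which states that any two square-integrable increasing functionals of a Poisson process are non-negatively correlated. Applying this to $F = V_i(K_t)$ and $G = V_j(K_t)$ yields $\Cov*{V_i(K_t)}{V_j(K_t)} \geq 0$ immediately, and the case where one index is $0$ is subsumed since $V_0(K_t) = \1\cbras*{K_t \neq \emptyset}$ is also increasing.

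The main obstacle, such as it is, lies in verifying the hypotheses of the FKG inequality cleanly: one must confirm that $K_t \mapsto V_i(K_t)$ is measurable and square-integrable (the latter follows from the variance bounds already cited, together with boundedness of $V_i$ on subsets of the fixed body $K$), and one must state the monotonicity of $K_t$ in the configuration precisely — adding a Dirac mass $\delta_x$ with $x \in K$ gives $K_t \subseteq K_t^x$, while $x \notin K$ leaves $K_t$ unchanged, so monotonicity holds on all of $\RR^d$. Once these routine points are dispatched, the Harris--FKG inequality does all the work, and combined with the Cauchy--Schwarz (or arithmetic-mean) upper bound the full chain \eqref{eq:intrinsic-volumes-covariances} follows.
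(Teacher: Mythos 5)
Your proposal is correct and matches the paper's proof in essence: the paper also obtains the lower bound from the Harris--FKG inequality for Poisson processes (phrased via $D_xV_j(K_t) \geq 0$, which is exactly your monotonicity-in-the-configuration condition) and the upper bound from Cauchy--Schwarz together with the variance bounds \eqref{eq:intrinsic:variance} and \eqref{eq:0-th:variance}. Your additional remarks on verifying measurability, integrability and the monotonicity of $K \mapsto K_t^x$ are just a more explicit spelling-out of the same argument.
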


\begin{proof}
	Since $D_xV_j(K_t) \geq 0$ for all $x \in \RR^d$ it follows from the Harris-FKG inequality for Poisson processes, see \cite[Theorem 11]{Last2016}, that $\E*{V_i(K_t) V_j(K_t)} \geq \E*{V_i(K_t)}\E*{V_j(K_t)}$, which directly implies the lower bound on the covariances.
	The Cauchy-Schwarz inequality implies 
	\begin{align*}
		\Cov*{V_i(K_t)}{V_j(K_t)} \leq \sqrt{\V*{V_i(K_t)} \V*{V_j(K_t)}},
	\end{align*}
	thus, using \eqref{eq:intrinsic:variance} and \eqref{eq:0-th:variance}, the upper bound on the covariances is obtained.
\end{proof}

We are now in a position to bound the variance of our valuation functional with the following Lemma:

\begin{lemma}
	Under the assumptions of Theorem \ref{thm:univariate} the variance of the valuation functional is bounded by
	\begin{align}
		t^{-1-\frac{2}{d+1}} \ll \V*{\varphi(K_t)} \ll t^{-1-\frac{2}{d+1}}.
	\end{align}
\end{lemma}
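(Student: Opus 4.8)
The plan is to combine the Hadwiger decomposition of the variance, written out at the start of the subsection, with the two-sided bounds already available for the individual intrinsic volumes. Recall that
\begin{align*}
	\V*{\varphi(K_t)} = \sum\limits_{i = 0}^d c_i^2 \V*{V_i(K_t)} + 2 \sum\limits_{i=0}^d \sum\limits_{j = i+1}^d c_i c_j \Cov*{V_i(K_t)}{V_j(K_t)}.
\end{align*}
By hypothesis $c_i c_j \geq 0$ for all $i,j$, and by Lemma \ref{lem:intrinsic-volumes-covariances} each covariance $\Cov*{V_i(K_t)}{V_j(K_t)}$ is nonnegative; hence every summand on the right-hand side is nonnegative and the whole sum is at least any single one of its terms.

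For the lower bound, I would fix the index $k \in \cbras*{1,\ldots,d}$ with $c_k \neq 0$ guaranteed by the assumption. Dropping all other (nonnegative) terms gives $\V*{\varphi(K_t)} \geq c_k^2 \V*{V_k(K_t)}$, and since $k \geq 1$ the lower estimate in \eqref{eq:intrinsic:variance} yields $c_k^2 \V*{V_k(K_t)} \gg t^{-1-\frac{2}{d+1}}$; as $c_k^2$ is a positive constant independent of $t$, this gives $\V*{\varphi(K_t)} \gg t^{-1-\frac{2}{d+1}}$.

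For the upper bound, I would bound each term of the finite double sum. The diagonal terms satisfy $c_i^2 \V*{V_i(K_t)} \ll t^{-1-\frac{2}{d+1}}$ using \eqref{eq:intrinsic:variance} for $i \geq 1$ and \eqref{eq:0-th:variance} for $i=0$, while the off-diagonal terms satisfy $2 c_i c_j \Cov*{V_i(K_t)}{V_j(K_t)} \ll t^{-1-\frac{2}{d+1}}$ by \eqref{eq:intrinsic-volumes-covariances}. Since there are only $\binom{d+1}{2} + (d+1)$ terms in total, a number depending only on $d$, summing the $\cO$-bounds preserves the order and gives $\V*{\varphi(K_t)} \ll t^{-1-\frac{2}{d+1}}$. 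Combining the two displays establishes the claimed $t^{-1-\frac{2}{d+1}} \ll \V*{\varphi(K_t)} \ll t^{-1-\frac{2}{d+1}}$.

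There is no real obstacle here: the proof is essentially bookkeeping once the sign assumption $c_i c_j \geq 0$ is used to make all contributions add up constructively rather than cancel. The only point requiring a little care is the lower bound, where one must use the existence of a nonzero coefficient with index at least $1$ — a zero-th intrinsic volume term alone would only give the exponentially small variance from \eqref{eq:0-th:variance}, which is why the hypothesis explicitly excludes $k = 0$.
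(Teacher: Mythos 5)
Your proof is correct and follows essentially the same route as the paper: the nonnegativity of all terms in the Hadwiger variance decomposition (from $c_ic_j \geq 0$ and Lemma \ref{lem:intrinsic-volumes-covariances}) gives the lower bound by keeping only the $c_k^2 \V*{V_k(K_t)}$ term, and the upper bound follows by bounding each of the finitely many variance and covariance terms by $t^{-1-\frac{2}{d+1}}$ via \eqref{eq:intrinsic:variance}, \eqref{eq:0-th:variance} and \eqref{eq:intrinsic-volumes-covariances}. Your remark that the hypothesis $k \geq 1$ is what rules out relying on the exponentially small $V_0$ variance matches the paper's intent as well.
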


\begin{proof}
	We assumed $c_ic_j \geq 0$ for all $i,j$ and that there exists at least one index $k \in \cbras*{1,\ldots,d}$ such that $c_k \neq 0$. Thus Lemma \ref{lem:intrinsic-volumes-covariances} implies
	\begin{align*}
		\V*{\varphi(K_t)} \geq c_k^2 \V*{V_k(K_t)} \gg t^{-1-\frac{2}{d+1}},
	\end{align*}
	and 
	\begin{align*}
		\V*{\varphi(K_t)} \ll (d+1) t^{-1-\frac{2}{d+1}} + (d+1)d \sqrt{\rbras*{t^{-1-\frac{2}{d+1}}}\rbras*{t^{-1-\frac{2}{d+1}}}} \ll t^{-1-\frac{2}{d+1}},
	\end{align*}
	which completes the proof.
\end{proof}

The crucial part in the proof of Theorem \ref{thm:univariate} is the application of the general bound given by Theorem \ref{thm:bound-uni}, thus we need to investigate the moments occuring in $\tau_1$, $\tau_2$ and $\tau_3$. In the first step, we adapt and slightly extend the proof from \cite{ThaeleTurchiWespi2018} for the binomial case, to work in the Poisson case, yielding upper bounds on the moments of the first and second order difference operators applied to the intrinsic volumes $V_j(K_t)$ which will be used in the second step to derive the bounds for the valuation functional.

\textbf{First order difference operator:}

Fix $x \in K$ and $j \in \cbras{1, \ldots, d}$, then conditioned on the event $A(\varepsilon_t, t)$, it follows that
\begin{align}\label{eq:first-order-diff-conditioned}
	D_xV_j(K_t) = \1\cbras*{x \in K \setminus K_t} D_xV_j(K_t) = \1\cbras*{x \in K \setminus K(v \geq \varepsilon_t)} D_xV_j(K_t),
\end{align}
thus we can restrict the following to the case $x \in K \setminus K(v \geq \varepsilon_t)$.

For $x \in K \setminus K(v \geq \varepsilon_t)$ we define $z$ to be the closest point to $x$ on the boundary $\partial K$. Since $K$ is smooth $z$ is uniquely determined, if $\varepsilon_t$ is sufficiently small.

The visible region of $z$ is defined as the set of all points that can be connected to $z$ by a straight line in $K$ avoiding $K(v \geq \varepsilon_t)$, i.e.
\begin{align}
	\Vis_z(t) := \cbras*{y \in K \setminus K(v \geq \varepsilon_t) : [y,z] \cap K(v \geq \varepsilon_t) = \emptyset}.
\end{align}
Note that, given the sandwiching $K(v \geq \varepsilon_t) \subset K_t \subset K$, a random point $x$ can influence the random polytope only in the visibility region.

We construct a full-dimensional spherical cap $C$ such that $K^x_t \setminus K_t \subseteq C$.
The definition of the visible region, that was first used in \cite{Vu2005} and \cite{BaranyReitzner2010}, is crucial in the following steps:

Let $y_1,y_2 \in \Vis_z(t)$, then there existing two $\varepsilon_t$-caps $C_1$ and $C_2$ such that the straight line $[y_1, z]$ resp. $[y_2, z]$ is contained in $C_1$ resp. $C_2$, thus
\begin{align*}
	\norm{y_1-y_2} \leq \norm{y_1 - z} + \norm{y_2 - z} \leq \diameter(C_1) + \diameter(C_2).
\end{align*}

Since the diameter of any $\varepsilon_t$-cap $C$ of $K$ can be bounded by $\bC \varepsilon_t^\frac{1}{d+1}$, see Lemma \ref{lem:geo:diam}, it follows directly that the diameter of the visibility region can be bounded by
\begin{align*}
	\rho := \diameter\rbras*{\Vis_z(t)} \ll \rbras*{\frac{\log(t)}{t}}^{\frac{1}{d+1}}.
\end{align*}
Let $D(z, \rho)$ be the set of all points on the boundary $\partial K$ which are of distance at most $\rho$ to $z$, i.e. 
\begin{align*}
	D(z,\rho) = \cbras*{y \in \partial K : \norm*{y - z} < \rho}
\end{align*}
and denote the cap, that is given by the convex hull of $D(z,\rho)$ by $C$, i.e.
\begin{align}\label{eq:cap-contains-vis}
	C := \conv\cbras*{D(z,\rho)}.
\end{align}

By construction, we have $K^x_t \setminus K_t \subseteq \Vis_z(t) \subseteq C$.
It follows from Lemma \ref{lem:geo:vol}, that the volume of $C$ is of order at most $\frac{\log(t)}{t}$.

Fix a linear subspace $\bbL \in G(d,j)$, then one has that the set-difference of the projection of $K^x_t$ and $K_t$ onto the subspace $\bbL$ is contained in the projection of $C$ onto $\bbL$,
\begin{align*}
	(K^x_t|\bbL) \setminus (K_t|\bbL) \subseteq C | \bbL.
\end{align*}
The $j$-dimensional volume of the projected cap $C | \bbL$ can be bounded in its order of magnitude by
\begin{align}\label{eq:vol-cap}
	\Lambda_j(C| \bbL) \ll \rbras*{\tfrac{\log(t)}{t}}^{\tfrac{j+1}{d+1}}.
\end{align}

Depending on the angle between $z$ and $\bbL$, $\sphericalangle(z,\bbL)$, the part $K^x_t \setminus K_t$ is not visible for the orthogonal projection on $\bbL$ since it is hidden behind $K(v \geq \varepsilon_t)$, i.e.
\begin{align*}
	(K^x_t \setminus K^x) | \bbL \subseteq K(v \geq \varepsilon_t) | \bbL,
\end{align*}
for sufficiently large $t$.
To obtain a bound on the maximal angle $\sphericalangle(z,\bbL)$ where the projection does not vanish we approximate $K$ by a ball $\BB^d(z_c,r)$ with center $z_c$ and radius $r$ such that $\BB^d(z_c,r) \subseteq K$ and $\BB^d(z_c,r) \cap \partial K = \cbras*{z}$. Indeed we approximate the boundary $\partial K$ of $K$ from the inside of $K$ by a ball, which is possible, since $K$ is sufficiently smooth.

We repeat the construction of the cap $C$ for $\BB^d(z_c,r)$ with the corresponding $\varepsilon_t$-floating body $\BB^d(v \geq \varepsilon_t)$ of the ball to obtain the cap $C_\BB$ and define $\alpha$ to be the central angle of $C_\BB$ in $\BB^d(z_c,r)$.
It follows from Lemma \ref{lem:geo:vol}, that the volume of $C_\BB$ is of order at most $\frac{\log(t)}{t}$, since $\rho_\BB \ll \rbras{\frac{\log(t)}{t}}^{\frac{1}{d+1}}$, which yields
\begin{align}\label{eq:angle-alpha-bound}
	\alpha \ll \rbras*{\frac{\log(t)}{t}}^{\frac{1}{d+1}}.
\end{align}

Thus it follows from $\BB^d(v \geq \varepsilon_t) \subseteq K(v \geq \varepsilon_t) \subseteq K_t \subseteq K_t^x$ that $K_t^x | \bbL = K_t | \bbL$ if $\sphericalangle(z,\bbL)$ is of larger order than $\alpha$, therefore we have
\begin{align*}
	\Lambda_j\rbras*{(K^x_t | \bbL) \setminus (K_t | \bbL )} \neq 0, 
	\quad \text{only if} \quad 
	\sphericalangle(z,\bbL) \ll \alpha.
\end{align*}

Using \eqref{eq:angle-alpha-bound} and \eqref{eq:vol-cap} it follows that
\begin{align*}
	\Lambda_j\rbras*{(K^x_t | \bbL) \setminus (K_t | \bbL )} & \leq \1\cbras*{\sphericalangle(z,\bbL) \ll \rbras*{\frac{\log(t)}{t}}^{\frac{1}{d+1}}} \Lambda_j(C | \bbL)\\
	& \leq \1\cbras*{\sphericalangle(z,\bbL) \ll \rbras*{\frac{\log(t)}{t}}^{\frac{1}{d+1}}} \rbras*{\frac{\log(t)}{t}}^{\frac{j+1}{d+1}}.
\end{align*}
Finally we use Kubota's formula \eqref{eq:kubota} together with \eqref{eq:first-order-diff-conditioned} and Lemma \ref{lem:measure-of-angle-set} to obtain
\begin{align*}
	& D_xV_j(K_t) = \1\cbras*{x \in K \setminus K(v \geq \varepsilon_t)} c(d,j) \int\limits_{G(d,j)} \Lambda_j\rbras*{(K^x_t | \bbL) \setminus (K_t | \bbL )} \nu_j(\id \bbL)\\
		& \ll \1\cbras*{x \in K \setminus K(v \geq \varepsilon_t)} \int\limits_{G(d,j)} \1\cbras*{\sphericalangle(z,\bbL) \ll \rbras*{\frac{\log(t)}{t}}^{\frac{1}{d+1}}} \rbras*{\frac{\log(t)}{t}}^{\frac{j+1}{d+1}}
		\nu_j(\id \bbL)\\
		& \ll \1\cbras*{x \in K \setminus K(v \geq \varepsilon_t)} \rbras*{\frac{\log(t)}{t}}^{\frac{j+1}{d+1}} \nu_j\rbras*{\cbras*{\bbL \in G(d,j) : \sphericalangle(z,\bbL) \ll \rbras*{\frac{\log(t)}{t}}^{\frac{1}{d+1}}} }\\
		& \ll \1\cbras*{x \in K \setminus K(v \geq \varepsilon_t)}\rbras*{\frac{\log(t)}{t}}^{\frac{j+1}{d+1}}\rbras*{\frac{\log(t)}{t}}^{\frac{d-j}{d+1}} = \1\cbras*{x \in K \setminus K(v \geq \varepsilon_t)}\frac{\log(t)}{t}.
\end{align*}
where $c(d,j) = \binom{d}{j} \frac{\kappa_d}{\kappa_j \kappa_{d-j}}$ can be omitted since we are bounding $D_xV_j(K_t)$ in its order of magnitude with respect to $t$.

\textbf{Second order difference operator:}

Fix $x,y \in K$ and $j \in \cbras*{1,\ldots,d}$, similar to \eqref{eq:first-order-diff-conditioned} and conditioned on the event $A(\varepsilon_t, t)$, we have that
\begin{align*}
	D_{x,y}^2V_j(K_t) = \1\cbras*{x,y \in K \setminus K(v \geq \varepsilon_t)} D_{x,y}^2V_j(K_t).
\end{align*}

To further restrict $x,y \in K \setminus K(v \geq \varepsilon_t)$ we show the following Lemma:
\begin{lemma}
	\label{lem:second-order-diff-volume}
	Fix two convex bodies $P,K \in \cK^d$ with $P \subset K$ and two points $x,y \in K \setminus P$. 
	Denote by $P^{xy}$, $P^{x}$ and $P^{y}$ the convex hulls of $P \cup \cbras*{x,y}$, $P \cup \cbras*{x}$ resp. $P \cup \cbras*{y}$.
	We define the visibility region of $x$ with respect to $P$ by
	\begin{align*}
		\cV_x(P) := \cbras*{z \in K \setminus P : [z,x] \cap P = \emptyset }.
	\end{align*}
	If $\cV_x(P) \cap \cV_y(P) = \emptyset$, then
	\begin{align}
		\label{eq:second-order-diff-volume:set-equation-1}
		P^x \cap P^y & = P,\\
		\label{eq:second-order-diff-volume:set-equation-2}
		P^x \cup P^y & = P^{xy},
	\end{align}
	and further it follows for all valuations $\psi:\cK^d \rightarrow \RR$ that the second order difference operator of $\psi(P)$ vanishes, i.e.
	\begin{align}
		\label{eq:second-order-diff-volume:is-zero}
		D_{x,y}^2\psi(P) = 0.
	\end{align}
\end{lemma}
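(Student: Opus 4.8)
The plan is to prove the two set identities \eqref{eq:second-order-diff-volume:set-equation-1} and \eqref{eq:second-order-diff-volume:set-equation-2} first, and then to read off \eqref{eq:second-order-diff-volume:is-zero} directly from the defining relation \eqref{eq:valuation-property} of a valuation. The geometric picture behind this is that $P^x$ arises from $P$ by attaching the cone of all segments $[p,x]$, $p\in P$, and that the assumption $\cV_x(P)\cap\cV_y(P)=\emptyset$ forces the cones attached at $x$ and at $y$ to overlap only inside $P$, so that $P^x$ and $P^y$ glue together along $P$ to form exactly $P^{xy}$.

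The first ingredient I would record is the elementary description $P^x=\bigcup_{p\in P}[p,x]$ (the set on the right is readily checked to be convex and to contain both $P$ and $x$, hence equals $\conv(P\cup\cbras*{x})$), together with the inclusion $P^x\setminus P\subseteq\cV_x(P)$. For the latter: if $w=(1-\lambda)p+\lambda x\in[p,x]$ with $p\in P$, $\lambda\in(0,1]$ and $w\notin P$, and if some $q\in[w,x]\cap P$ existed, then a short computation would put $q$ on the segment $[p,x]$ between $w$ and $x$, so that $w\in[p,q]\subseteq P$ by convexity of $P$, contradicting $w\notin P$; since moreover $w\in K$ (because $K\supseteq[p,x]$) this yields $w\in\cV_x(P)$. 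The symmetric statements hold with $y$ in place of $x$. Identity \eqref{eq:second-order-diff-volume:set-equation-1} is then immediate: $P\subseteq P^x\cap P^y$ is trivial, and a point $w\in(P^x\cap P^y)\setminus P$ would lie in $\cV_x(P)\cap\cV_y(P)=\emptyset$.

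For \eqref{eq:second-order-diff-volume:set-equation-2} only the inclusion $P^{xy}\subseteq P^x\cup P^y$ has content. I would first observe that the hypothesis forces $x\neq y$ and $[x,y]\cap P\neq\emptyset$: one always has $x\in\cV_x(P)$ (as $x\in K\setminus P$ and $[x,x]\cap P=\emptyset$), so if $x=y$ then $x\in\cV_x(P)\cap\cV_y(P)$, a contradiction; and if $[x,y]\cap P$ were empty, every point of the open segment $(x,y)$ would lie in $K\setminus P$ and see both $x$ and $y$ without meeting $P$, hence would lie in $\cV_x(P)\cap\cV_y(P)$. Writing $[x,y]\cap P=[a,b]$ with $x,a,b,y$ occurring in this order along $[x,y]$, convexity of $P^x$ and of $P^y$ gives $[x,a]\subseteq P^x$ and $[b,y]\subseteq P^y$, whence $[x,y]=[x,a]\cup[a,b]\cup[b,y]\subseteq P^x\cup P\cup P^y=P^x\cup P^y$. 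Now any $w\in P^{xy}=\conv(P\cup\cbras*{x,y})$ can be written $w=\lambda p+\mu x+\nu y$ with $p\in P$ (note $P\neq\emptyset$ under the hypothesis) and $\lambda,\mu,\nu\geq 0$, $\lambda+\mu+\nu=1$, after collecting the contribution of $P$ into the single point $p$. If $\mu+\nu=0$ then $w=p\in P\subseteq P^x$; otherwise $w$ lies on the segment $[p,w']$ with $w':=(\mu x+\nu y)/(\mu+\nu)\in[x,y]\subseteq P^x\cup P^y$, and since $p\in P\subseteq P^x\cap P^y$, convexity of $P^x$ and of $P^y$ forces $[p,w']\subseteq P^x$ or $[p,w']\subseteq P^y$; either way $w\in P^x\cup P^y$, proving \eqref{eq:second-order-diff-volume:set-equation-2}.

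With both identities available, \eqref{eq:second-order-diff-volume:is-zero} follows at once: $P^x$, $P^y$ and $P^x\cup P^y=P^{xy}$ are nonempty, compact and convex, hence all lie in $\cK^d$, so \eqref{eq:valuation-property} applied to $P^x$ and $P^y$ gives $\psi(P^x)+\psi(P^y)=\psi(P^{xy})+\psi(P^x\cap P^y)=\psi(P^{xy})+\psi(P)$, which rearranges to $D_{x,y}^2\psi(P)=\psi(P^{xy})-\psi(P^x)-\psi(P^y)+\psi(P)=0$. The only genuinely geometric step, and the one I expect to need the most care, is the inclusion $P^{xy}\subseteq P^x\cup P^y$; what makes it go through smoothly is the remark that the hypothesis already forces $[x,y]$ to meet $P$, which places the whole segment $[x,y]$ inside $P^x\cup P^y$, after which every point of $P^{xy}$ is either in $P$ or lies on a segment from a point of $P$ to a point of $[x,y]$ and hence in $P^x$ or in $P^y$.
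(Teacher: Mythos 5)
Your proof is correct, and its overall skeleton (establish \eqref{eq:second-order-diff-volume:set-equation-1} and \eqref{eq:second-order-diff-volume:set-equation-2}, then feed them into the valuation property \eqref{eq:valuation-property} applied to $P^x$ and $P^y$) coincides with the paper's; the intersection identity and the final valuation step are handled essentially identically. Where you take a genuinely different route is the inclusion $P^{xy}\subseteq P^x\cup P^y$. The paper argues by contradiction: a point $z\in P^{xy}\setminus(P^x\cup P^y)$ is written both as a convex combination of a point $u\in P^x$ with $y$ and of a point $v\in P^y$ with $x$ (with a maximality choice on the coefficients), and one then deduces $[x,z]\cap P=\emptyset$ and $[y,z]\cap P=\emptyset$, so $z\in\cV_x(P)\cap\cV_y(P)$, contradicting the hypothesis. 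You instead argue directly: you first observe that the hypothesis forces $x\neq y$ and $[x,y]\cap P\neq\emptyset$, which places the whole segment $[x,y]$ inside $P^x\cup P^y$, and then you decompose an arbitrary point of $P^{xy}$ as lying on a segment from a point of $P$ to a point of $[x,y]$, which lands it in $P^x$ or $P^y$ by convexity. Your version buys explicitness: you also prove the inclusion $P^x\setminus P\subseteq\cV_x(P)$, which the paper only asserts in the form $P^x\subseteq\cV_x(P)\cup P$, and the auxiliary fact that disjoint visibility regions force $[x,y]$ to meet $P$ is a clean structural observation. The paper's contradiction argument is shorter but terser, leaving the step ``by construction it now follows that $[x,z]\cap P=\emptyset$ and $[y,z]\cap P=\emptyset$'' to the reader; both arguments ultimately rest on the same geometric input, namely that whatever is gained outside $P$ by adjoining $x$ resp.\ $y$ is visible from that point.
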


\begin{proof}
	Using $P^x \subseteq \cV_x(P) \cup P$ and $P^y \subseteq \cV_y(P) \cup P$ it follows directly from $\cV_x(P) \cap \cV_y(P) = \emptyset$ that $P^x \cap P^y \subseteq (\cV_x(P) \cap \cV_y(P)) \cup P = P$. Additionally the inclusion $P \subseteq P^x \cap P^y$, that follows directly from the definition of the convex hull, gives \eqref{eq:second-order-diff-volume:set-equation-1}.
	
	Again, it is immediate that $P^x \subseteq P^{xy}$ and $P^y \subseteq P^{xy}$, thus it remains to prove that $P^{xy} \subseteq P^x \cup P^y$.
	Assume $z \in P^{xy} \setminus (P^x \cup P^y)$, then there existing $\lambda_1, \lambda_2 \in [0,1]$ and $u \in P^x$, $v \in P^y$ such that
	\begin{align*}
		z = \lambda_1 u + (1-\lambda_1)y = \lambda_2 v + (1-\lambda_2) x,
	\end{align*}
	where we can safely assume that $u$ and $v$ are chosen such that $\lambda_1$ and $\lambda_2$ are maximized.
	Note that $u \in P^y$ implies $z \in P^y$ resp. $v \in P^x$ implies $z \in P^x$, a contradiction, which leads to the remaining case $u \in P^x \setminus P^y$ and $v \in P^y \setminus P^x$.
	By construction it now follows that $[x,z] \cap P = \emptyset$ and $[y,z] \cap P = \emptyset$, which yields $z \in \cV_x(P)$ and $z \in \cV_y(P)$ contrary to $\cV_x(P) \cap \cV_y(P) = \emptyset$ which gives \eqref{eq:second-order-diff-volume:set-equation-2}.
	
	The second order difference operator of $\psi(P)$ is given by
	\begin{align*}
		D^2_{x,y}\psi(P) = \psi(P^{xy}) - \psi(P^x) - \psi(P^y) + \psi(P).
	\end{align*}
	Using \eqref{eq:second-order-diff-volume:set-equation-1} and \eqref{eq:second-order-diff-volume:set-equation-2} we can rewrite the first term according to the valuation property \eqref{eq:valuation-property} to
	\begin{align*}
		\psi(P^{xy}) = \psi(P^x) + \psi(P^y) - \psi(P^x \cap P^y) = \psi(P^x) + \psi(P^y) - \psi(P),
	\end{align*}
	which gives \eqref{eq:second-order-diff-volume:is-zero} when substituted in the representation of $D^2_{x,y}\psi(P)$.
\end{proof}

Since $\Vis_x(t) \cap \Vis_y(t) = \emptyset$ implies the conditions of Lemma \ref{lem:second-order-diff-volume} for $P = K_t \supseteq K(v \geq \varepsilon_t)$ it follows that
\begin{align*}
	D_{x,y}^2V_j(K_t) = \1\cbras*{x,y \in K \setminus K(v \geq \varepsilon_t)}\1\cbras*{\Vis_x(t) \cap \Vis_y(t) \neq \emptyset} D_{x,y}^2V_j(K_t).
\end{align*}

Taking $D_{x,y}^2V_j(K_t) = D_x(D_yV_j(K_t))$ we obtain $\abs*{D_{x,y}^2V_j(K_t)} \leq D_xV_j(K^y_t) + D_xV_j(K_t)$
where we immediately see, that the second term $D_xV_j(K_t)$ is the first order difference operator that we have bounded before.
Using $K(v \geq \varepsilon_t) \subseteq K_t \subseteq K_t^y$ we can substitute $K_t$ with $K_t^y$ in the proof for the first order difference operator to obtain
\begin{align*}
	& D_{x,y}^2V_j(K_t) = \1\cbras*{x,y \in K \setminus K(v \geq \varepsilon_t)} \1\cbras*{\Vis_x(t) \cap \Vis_y(t) \neq \emptyset} \rbras*{D_xV_j(K^y_t) + D_xV_j(K_t)}\\
	&  \ll \1\cbras*{x,y \in K \setminus K(v \geq \varepsilon_t)}  \1\cbras*{\Vis_x(t) \cap \Vis_y(t) \neq \emptyset} \rbras*{\frac{\log(t)}{t} + \frac{\log(t)}{t}}\\
	&  \ll \1\cbras*{x,y \in K \setminus K(v \geq \varepsilon_t)}  \1\cbras*{\Vis_x(t) \cap \Vis_y(t) \neq \emptyset} \frac{\log(t)}{t}.
\end{align*}

The results of the prior discussion can be summarized in the following lemma bounding the order of magnitude of the $p$-th absolute moment of the first and second order difference operator of the intrinsic volumes $V_j(K_t)$.

\begin{lemma}
	\label{lem:diff-moments-volume}
	Let $p \in \cbras*{1,\ldots,8}$, $j \in \cbras*{0,\ldots,d}$ and $x,y \in K$, then
	\begin{align}
		\label{eq:diff-moments-volume:first}
		\Eabs*{(D_xV_j(K_t))^p} & \ll \1\cbras*{x \in K \setminus K(v \geq \varepsilon_t)} \rbras*{\frac{\log(t)}{t}}^p,\\
		\label{eq:diff-moments-volume:second}
		\Eabs*{(D_{x,y}^2V_j(K_t))^p} & \ll \1\cbras*{x,y \in K \setminus K(v \geq \varepsilon_t)}\1\cbras*{\Vis_x(t) \cap \Vis_y(t) \neq \emptyset} \rbras*{\frac{\log(t)}{t}}^p,
	\end{align}
\end{lemma}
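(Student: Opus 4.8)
The idea is to upgrade the pointwise estimates on the difference operators obtained above — which were established on the event $A(\varepsilon_t,t)$ — to unconditional moment bounds, by splitting the expectation over $A(\varepsilon_t,t)$ and its complement. On $A(\varepsilon_t,t)$ the work is essentially done: for $j \in \cbras*{1,\ldots,d}$ the cap construction together with Kubota's formula and Lemma \ref{lem:measure-of-angle-set} gives, for \emph{every} configuration in $A(\varepsilon_t,t)$, the deterministic bound $D_xV_j(K_t) \le \bC\,\1\cbras*{x \in K \setminus K(v \geq \varepsilon_t)}\,\log(t)/t$, while combining the triangle inequality $\abs*{D_{x,y}^2V_j(K_t)} \le D_xV_j(K^y_t) + D_xV_j(K_t)$ with Lemma \ref{lem:second-order-diff-volume} — which forces the second order operator to vanish once $\Vis_x(t) \cap \Vis_y(t) = \emptyset$ — gives $\abs*{D_{x,y}^2V_j(K_t)} \le \bC\,\1\cbras*{x,y \in K \setminus K(v \geq \varepsilon_t)}\,\1\cbras*{\Vis_x(t) \cap \Vis_y(t) \neq \emptyset}\,\log(t)/t$ on the same event. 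Raising these to the $p$-th power (the indicators being idempotent) and taking expectation over $A(\varepsilon_t,t)$ reproduces the right-hand sides of \eqref{eq:diff-moments-volume:first} and \eqref{eq:diff-moments-volume:second} up to the constant $\bC$.

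On the complementary event $A^c(\varepsilon_t,t)$ I would use only crude deterministic bounds. Since $K_t \subseteq K^x_t \subseteq K$ and the intrinsic volumes are monotone under set inclusion, $0 \le D_xV_j(K_t) = V_j(K^x_t) - V_j(K_t) \le V_j(K)$, and likewise $\abs*{D_{x,y}^2V_j(K_t)} \le 2V_j(K)$; both are bounded by a constant. Hence the expectations of $\abs*{D_xV_j(K_t)}^p$ and of $\abs*{D_{x,y}^2V_j(K_t)}^p$ restricted to $A^c(\varepsilon_t,t)$ are at most $\bC\,\Prob*{A^c(\varepsilon_t,t)} \ll t^{-\beta}$, where $\beta = 16d+1$ is the exponent fixed via Lemma \ref{lem:floating-body-contained}; this term is negligible — of smaller order than anything that enters the bounds for $\tau_1,\tau_2,\tau_3$ later — and is absorbed. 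For $j = 0$ the statement is immediate: $V_0(K_t) = \1\cbras*{K_t \neq \emptyset}$, so $D_xV_0(K_t)$ and $D_{x,y}^2V_0(K_t)$ can be non-zero only on the event $\cbras*{\eta_t \cap K = \emptyset}$, since a single point added inside $K$ already makes the hull non-empty, and this event has probability $e^{-t\Lambda_d(K)}$, which being exponentially small is dominated by $\rbras*{\log(t)/t}^p$. Adding the two contributions gives \eqref{eq:diff-moments-volume:first} and \eqref{eq:diff-moments-volume:second} for all $j \in \cbras*{0,\ldots,d}$.

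The only genuine point to be careful about — and the main obstacle, modest as it is — is the passage from the conditional (on $A(\varepsilon_t,t)$) estimates to the unconditional statement: one has to check that the estimates from the preceding computation really hold for every configuration in $A(\varepsilon_t,t)$, which they do because the cap $C$, the central angle $\alpha$, the inscribed ball and the visibility region $\Vis_x(t)$ depend only on $K$ and $\varepsilon_t$ and not on the random configuration $\eta_t$; and that the $A^c(\varepsilon_t,t)$-contribution is genuinely of the announced negligible order $t^{-\beta}$. The remainder is routine bookkeeping with indicator functions.
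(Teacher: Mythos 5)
Your proposal follows essentially the same route as the paper: on $A(\varepsilon_t,t)$ you invoke the pointwise bounds already established for $D_xV_j(K_t)$ and $D^2_{x,y}V_j(K_t)$, on $A^c(\varepsilon_t,t)$ you use the crude bounds $V_j(K)$ resp.\ $2V_j(K)$ together with $\Prob*{A^c(\varepsilon_t,t)} \ll t^{-\beta}$, $\beta = 16d+1$, and you treat $j=0$ separately via $V_0(K_t)=\1\cbras*{K_t \neq \emptyset}$ and the exponentially small probability $e^{-t\Lambda_d(K)}$, exactly as in the paper. The argument is correct (including your remark that the $A^c$-contribution is absorbed in the later estimates, which is how the paper handles it as well), so nothing needs to be changed.
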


\begin{proof}
	Let $j \neq 0$. On the event $A(\varepsilon_t, t)$ we use the bounds derived before and on the complementary event $A^c(\varepsilon_t, t)$ it is sufficient to use the estimations $D_xV_j(K_t) \leq V_j(K)$ resp. $\abs*{D_{x,y}^2V_j(K_t)} \leq 2 V_j(K)$, thus
	\begin{align*}
		\Eabs*{(D_xV_j(K_t))^p} & = \E*{(D_xV_j(K_t))^p\1_{A(\varepsilon_t, t)}} + \E*{(D_xV_j(K_t))^p\1_{A^c(\varepsilon_t, t)}}\\
			& \ll  \1\cbras*{x \in K \setminus K(v \geq \varepsilon_t)} \rbras*{\frac{\log(t)}{t}}^p \Prob*{A(\varepsilon_t, t)}\\
			& \quad + V_j(K)^p \Prob*{A^c(\varepsilon_t, t)}
	\end{align*}
	and
	\begin{align*}
		& \Eabs*{(D_{x,y}^2V_j(K_t))^p} \\
		\leq &~ \Eabs*{(D_{x,y}^2V_j(K_t))^p\1_{A(\varepsilon_t, t)}} + \Eabs*{(D_{x,y}^2V_j(K_t))^p\1_{A^c(\varepsilon_t, t)}}\\
		\ll &~ \1\cbras*{x,y \in K \setminus K(v \geq \varepsilon_t)}  \1\cbras*{\Vis_x(t) \cap \Vis_y(t) \neq \emptyset} \rbras*{\frac{\log(t)}{t}}^p \Prob*{A(\varepsilon_t, t)}\\
			& \quad + (2V_j(K))^p \Prob*{A^c(\varepsilon_t, t)}.
	\end{align*}
	Since $\Prob*{A(\varepsilon_t, t)} \leq 1$ and $\Prob*{A^c(\varepsilon_t, t)} \leq t^{-\beta}$, with $\beta = 16d+1 > p$, see Lemma \ref{lem:floating-body-contained}, our claim follows for $j \in \cbras*{1,\ldots,d}$.
	
	Let $j = 0$. We use $V_0(K_t) = \1\cbras*{K_t \neq \emptyset}$ to derive 
	\begin{align*}
		\Eabs*{(D_xV_j(K_t))^p} & = \1\cbras*{x \in K \setminus K(v \geq \varepsilon_t)} \Prob*{K_t = \emptyset}
	\end{align*}
	and
	\begin{align*}
		\Eabs*{(D_{x,y}^2V_0(K_t))^p} & = \1\cbras*{x \in K \setminus K(v \geq \varepsilon_t)}\1\cbras*{\Vis_x(t) \cap \Vis_y(t) \neq \emptyset} \Prob*{K_t = \emptyset},
	\end{align*}
	thus using $\Prob*{K_t = \emptyset} = e^{-t\Lambda_d(K)}$ the claim follows by bounding the exponential decay with $(\frac{\log(t)}{t})^p$.
\end{proof}

Our next objective is to prove corresponding bounds on the moments of the first and second order difference operator of the valuation functional we wish to study. 

\begin{lemma}
	\label{lem:diff-moments-valuations}
	Let $p \in \cbras*{1,\ldots,8}$ and $x,y \in K$, then
	\begin{align}
		\label{eq:diff-moments-valuations:first}
		\Eabs*{(D_x\varphi(K_t))^p} & \ll \1\cbras*{x \in K \setminus K(v \geq \varepsilon_t)} \rbras*{\frac{\log(t)}{t}}^p,\\
		\label{eq:diff-moments-valuations:second}
		\Eabs*{(D_{x,y}^2\varphi(K_t))^p} & \ll \1\cbras*{x,y \in K \setminus K(v \geq \varepsilon_t)}\1\cbras*{\Vis_x(t) \cap \Vis_y(t) \neq \emptyset} \rbras*{\frac{\log(t)}{t}}^p.
	\end{align}
\end{lemma}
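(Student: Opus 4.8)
The plan is to reduce the statement to Lemma~\ref{lem:diff-moments-volume} by combining Hadwiger's decomposition with the linearity of the difference operators. By Theorem~\ref{thm:hadwiger} the valuation satisfies $\varphi(L) = \sum_{i=0}^d c_i V_i(L)$ for every $L \in \cK^d$, and since $f \mapsto D_xf$ and $f \mapsto D_{x,y}^2 f$ act linearly on the representative $f$, this yields the pointwise identities
\begin{align*}
	D_x\varphi(K_t) = \sum_{i=0}^d c_i\, D_xV_i(K_t), \qquad D_{x,y}^2\varphi(K_t) = \sum_{i=0}^d c_i\, D_{x,y}^2 V_i(K_t).
\end{align*}

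First I would estimate the $p$-th absolute moment of such a finite sum by the sum of the individual $p$-th absolute moments: convexity of $s \mapsto s^p$ on $[0,\infty)$ gives $\abs{\sum_{i=0}^d a_i}^p \leq (d+1)^{p-1}\sum_{i=0}^d \abs{a_i}^p$ for real $a_0,\ldots,a_d$, hence
\begin{align*}
	\Eabs*{(D_x\varphi(K_t))^p} \leq (d+1)^{p-1} \sum_{i=0}^d \abs{c_i}^p\, \Eabs*{(D_xV_i(K_t))^p},
\end{align*}
and likewise for $D_{x,y}^2\varphi(K_t)$. Since $p \in \cbras*{1,\ldots,8}$ lies in the range covered by Lemma~\ref{lem:diff-moments-volume}, I may then apply \eqref{eq:diff-moments-volume:first} and \eqref{eq:diff-moments-volume:second} to each summand; the prefactor $(d+1)^{p-1}\sum_{i=0}^d \abs{c_i}^p$ depends only on $d$, the fixed valuation $\varphi$, and $p \le 8$, so it is absorbed into $\bC$.

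It remains to observe that the indicator $\1\cbras*{x \in K \setminus K(v \geq \varepsilon_t)}$ appearing in \eqref{eq:diff-moments-volume:first}, respectively the product $\1\cbras*{x,y \in K \setminus K(v \geq \varepsilon_t)}\1\cbras*{\Vis_x(t) \cap \Vis_y(t) \neq \emptyset}$ appearing in \eqref{eq:diff-moments-volume:second}, is the same for every index $i$; it therefore factors out of the sum and survives the triangle inequality, producing exactly the claimed bounds \eqref{eq:diff-moments-valuations:first} and \eqref{eq:diff-moments-valuations:second}. There is no genuine obstacle here: the statement is a routine corollary of Lemma~\ref{lem:diff-moments-volume}, and the only points worth a line of care are that the linearity of $D_x$ and $D_{x,y}^2$ transfers from representatives to the random variables themselves, and that the common indicator functions are identical across all $d+1$ summands, so they are not weakened when the moments are split.
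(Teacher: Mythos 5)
Your proof is correct and reaches the same endpoint — reducing everything to Lemma~\ref{lem:diff-moments-volume} via Hadwiger's decomposition and the linearity of $D_x$ and $D_{x,y}^2$ — but the intermediate tool you use differs from the paper's. The paper expands $(D_x\varphi(K_t))^p = (\sum_i c_i D_xV_i(K_t))^p$ multinomially, obtaining a sum over index tuples $(j_1,\ldots,j_p)$ of expectations of products $\prod_k D_xV_{j_k}(K_t)$, and then applies the generalized H\"older inequality to factor each such expectation into a product of $p$-th moments. You instead apply Jensen's inequality (convexity of $s\mapsto s^p$) directly to the $(d+1)$-term sum, getting $\abs{\sum_i a_i}^p \le (d+1)^{p-1}\sum_i\abs{a_i}^p$ before any expectations are taken, which bypasses the cross terms and H\"older entirely. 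Both routes are valid; yours is the more economical of the two, as it never generates mixed products and handles the signs of the $c_i$ transparently via $\abs{c_i}^p$, whereas the paper's version as written implicitly relies on the sign condition $c_ic_j\ge 0$ (or a missing absolute value) to make $\prod_k c_{j_k}$ nonnegative. Your observation that the common indicator factors out of the sum is exactly what makes the bound close cleanly, and the range restriction $p\le 8$ is used in precisely the same way — to stay inside the range of Lemma~\ref{lem:diff-moments-volume}.
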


\begin{proof}
	Since $D_x$ is linear we obtain
	\begin{align*}
		\Eabs*{(D_x\varphi(K_t))^p} \leq \sum\limits_{j_1, \ldots, j_p = 0}^d \rbras*{\prod\limits_{k=1}^p c_{j_k}} \Eabs*{\prod\limits_{k=1}^p D_xV_{j_k}(K_t)}.
	\end{align*}
	and the generalized H\"older inequality, yields
	\begin{align*}
		\Eabs*{(D_x\varphi(K_t))^p} \leq \sum\limits_{j_1, \ldots, j_p = 0}^d \rbras*{\prod\limits_{k=1}^p c_{j_k}} \rbras*{\prod\limits_{k=1}^p \Eabs*{D_xV_{j_k}(K_t)}^p}^{\frac{1}{p}}.
	\end{align*}
	
	Therefore we can use \eqref{eq:diff-moments-volume:first} to bound all moments on the right hand side yielding \eqref{eq:diff-moments-valuations:first}.
	The proof of \eqref{eq:diff-moments-valuations:second} is similar using \eqref{eq:diff-moments-volume:second} instead of \eqref{eq:diff-moments-volume:first}.
\end{proof}

Before we apply the previous lemma to the error bounds $\tau_1$, $\tau_2$ and $\tau_3$ we introduce two estimations for the domain of integration.
From \cite[Theorem 6.3]{Barany2008} it follows directly that
\begin{align}\label{eq:missed-volume-bound}
	\Lambda_d(K \setminus K(v \geq \varepsilon_t)) \ll \rbras*{\frac{\log(t)}{t}}^{\frac{2}{d+1}}.
\end{align}
Denote by $C(x)$ resp. $C(z)$ the caps constructed according to \eqref{eq:cap-contains-vis} for points $x, z \in K \setminus K(v \geq \varepsilon_t)$, then for every fixed $x \in K \setminus K(v \geq \varepsilon_t)$ we have
\begin{align*}
	\cbras*{y \in K \setminus K(v \geq \varepsilon_t) : \Vis_x(t) \cap \Vis_y(t) \neq \emptyset} \subseteq \bigcup\limits_{z \in \Vis_x(t)} \Vis_z(t) \subseteq \bigcup\limits_{z \in C(x)} C(z).
\end{align*}
Recall that the volumes of $C(x)$ and $C(z)$ are of order at most $\frac{\log(t)}{t}$, thus Lemma \ref{lem:geo:union} yields
\begin{align}\label{eq:visible-intersection-bound}
	\Lambda_d\rbras*{\cbras*{y \in K \setminus K(v \geq \varepsilon_t) : \Vis_x(t) \cap \Vis_y(t) \neq \emptyset}} \ll \frac{\log(t)}{t},
\end{align}
for all $x \in K \setminus K(v \geq \varepsilon_t)$.

Applying the previous results to the error bound $\tau_1$ yields

\begin{align*}
	\tau_1 & \ll \V*{\varphi(K_t)}^{-2} \rbras*{\frac{\log(t)}{t}}^4 t^3 \int\limits_{K} \1\cbras*{x_3 \in K \setminus K(v \geq \varepsilon_t)} \\
			& \quad \times \rbras*{\int\limits_K \1\cbras*{\Vis_{x_1}(t) \cap \Vis_{x_3}(t) \neq \emptyset} \id x_1} \\
			& \quad \times \rbras*{\int\limits_K \1\cbras*{\Vis_{x_2}(t) \cap \Vis_{x_3}(t) \neq \emptyset} \id x_2} \id x_3\\
			& \ll \rbras*{t^{-1-\frac{2}{d+1}}}^{-2} \rbras*{\frac{\log(t)}{t}}^4 t^3 \rbras*{\frac{\log(t)}{t}}^{\frac{2}{d+1}} \rbras*{\frac{\log(t)}{t}}^2\\
			& \ll t^{-1 + \frac{2}{d+1}} \log(t)^{6+\frac{2}{d+1}}.
\end{align*}
In the same manner we can see that
\begin{align*}
	\tau_2 & \ll t^{-1+\frac{2}{d+1}} \log(t)^{6 + \frac{2}{d+1}},\\
	\tau_3 & \ll t^{-\frac{1}{2}+\frac{1}{d+1}} \log(t)^{3+\frac{2}{d+1}}.
\end{align*}

Combining these three bounds with Theorem \ref{thm:bound-uni} leads to
\begin{align*}
	\dist_W(\tilde{\varphi}(K_t),Z) \leq 2 \sqrt{\tau_1} + \sqrt{\tau_2} + \tau_3 \ll t^{-\frac{1}{2}+\frac{1}{d+1}} \log(t)^{3+\frac{2}{d+1}},
\end{align*}
for $Z \distribute \cN(0,1)$, completing the proof of Theorem \ref{thm:univariate}.

\subsection{Proof of Theorem \ref{thm:multivariate}: multivariate functional}

We start to investigate the moments of the first and second order difference operators applied to the components of the $\bff$-vector by combining combinatorial results from \cite{Reitzner2005b} with the floating body and economic cap covering approach from \cite{Reitzner2010} and \cite{ThaeleTurchiWespi2018}.

\textbf{First order difference operator:}

Fix $x \in K$ and $j \in \cbras*{0,\ldots,d-1}$, then conditioned on the event $A(\varepsilon_t, t)$, it follows similar to \eqref{eq:first-order-diff-conditioned} that
\begin{align*}
	D_xf_j(K_t) = \1\cbras*{x \in K \setminus K(v \geq \varepsilon_t)}D_xf_j(K_t),
\end{align*}
thus we can restrict the following to the case $x \in K \setminus K(v \geq \varepsilon_t)$.

Let $K_t$ be fixed and assume $x \not\in K_t$.
Since the polytope $K_t$ is simplicial and all verticies are in general position almost surely, analysis similar to that in \cite[Section 4]{Reitzner2005b} allows us to decompose $D_xf_j(K_t)$ into the number of $j$-faces gained, denoted by $f_j^+$, and the number of $j$-faces lost, denoted by $f_j^-$:
\begin{align*}
	\abs*{D_xf_j(K_t)} = \abs*{f_j^+ - f_j^-} \leq f_j^+ + f_j^-.
\end{align*}
Every $j$-face gained in $K_t^x$ is the convex hull of $x$ and a $(j-1)$-face in $\link(K_t^x,x)$. Additionally it can easily be seen that every $(j-1)$-face in $\link(K_t^x,x)$ is also contained in $\cF_{j-1}^{\Vis}(K_t,x)$, thus
\begin{align*}
	f_j^+ \leq f_{j-1}\rbras*{\link(K_t^x,x)} \leq \abs*{\cF_{j-1}^{\Vis}(K_t,x)}
\end{align*}
On the other hand, the $j$-faces in $\cF_j(K_t)$ that are lost have to be visible from $x$, thus
\begin{align*}
	f_j^- \leq \abs*{\cF_{j}^{\Vis}(K_t,x)}.
\end{align*}
Note that not all visible $j$-faces are removed, to gain the exact amount of lost $j$-face, one has to calculate the number of $j$-faces that are visible and not contained in the link of $x$ in the new polytope $K_t^x$, i.e. $\abs*{\cF_j^{\Vis}(K_t,x) \setminus \link(K_t^x,x)}$, as we will see later for the second order difference operator.

Let $z$ be the closest point to $x$ on the boundary of $\partial K$, then it follows immediately that every visible $i$-face $\frF \in \cF_i^{\Vis}(K_t,x)$ has to be as subset of $\Vis_z(t)$. Since $(i+1)$ pairwise distinct points are needed to form an $i$-face, the number of visible $i$-faces can be bound by the number of points in $\Vis_z(t)$, i.e.
\begin{align*}
	\abs*{\cF_i^{\Vis}(K_t,x)} \leq \binom{\eta\rbras*{\Vis_z(t)}}{i+1}
\end{align*}
for all $i \in \cbras*{0, \ldots, d-1}$.
Combining these steps we obtain
\begin{align*}
	\abs*{D_xf_j(K_t)} \leq \binom{\eta\rbras*{\Vis_z(t)}}{j} + \binom{\eta\rbras*{\Vis_z(t)}}{j+1} = \binom{\eta\rbras*{\Vis_z(t)}+1}{j+1} \leq \rbras*{\eta\rbras*{\Vis_z(t)}+1}^{j+1}
\end{align*}
and further for $p \in \cbras*{1,\ldots,8}$,
\begin{align*}
	\Eabs*{D_xf_j(K_t)\1_{A(\varepsilon_t,t)}}^p \leq \E*{\rbras*{\eta\rbras*{\Vis_z(t)}+1}^{p(j+1)}}.
\end{align*}
The binomial theorem and the fact that $\eta\rbras*{\Vis_z(t)}$ is Poisson distributed with parameter $\mu(\Vis_z(t))$ yields
\begin{align*}
	\Eabs*{D_xf_j(K_t)\1_{A(\varepsilon_t,t)}}^p & \leq \sum\limits_{i = 0}^{p(j+1)} \binom{p(j+1)}{i} \E*{\eta\rbras*{\Vis_z(t)}^i}\\
		& \leq \sum\limits_{i=0}^{p(j+1)} \binom{p(j+1)}{i} \sum\limits_{k=0}^i \stirling{i}{k} \mu(\Vis_z(t))^k
\end{align*}
where $\stirling{i}{k}$ denotes the Stirling numbers of the second kind.
Recall that $\mu(\Vis_z(t)) = t \Lambda_d(\Vis_z(t)) \ll t \frac{\log(t)}{t} = \log(t)$ and $j \in \cbras*{0, \ldots, d-1}$, thus
\begin{align*}
	\Eabs*{D_xf_j(K_t)\1_{A(\varepsilon_t,t)}}^p  \ll \log(t)^{pd}.
\end{align*}

Conditioned on the complementary event $A^c(\varepsilon_t,t)$ we need to slightly modify the proof, replacing $\eta(\Vis_z(t))$ by $\eta(K)$, the number of all points in $K$, and using the Cauchy-Schwarz inequality to separate the expectation of the indicator, from the moments of $\eta(K)$:

\begin{align*}
	\Eabs*{D_xf_j(K_t)\1_{A^c(\varepsilon_t,t)}}^p & \leq \sum\limits_{i=0}^{p(j+1)} \binom{p(j+1)}{i} \rbras*{ \E*{\1_{A^c(\varepsilon_t,t)}} \E*{\eta(K)^{2i}}}^\frac{1}{2}\\
		& \leq \sqrt{\Prob*{A^c(\varepsilon_t,t)}} \sum\limits_{i=0}^{p(j+1)} \binom{p(j+1)}{i} \rbras*{\sum\limits_{k=0}^{2i} \stirling{i}{k} \mu(K)^k}^\frac{1}{2}\\
		& \ll t^{-\frac{\beta}{2}} t^{pd} \ll 1,
\end{align*}
since $\beta = 16d+1 \geq 2pd$.

\textbf{Second order difference operator}:

Fix $x,y \in K$ and $j \in \cbras*{0, \ldots, d-1}$, similar to the intrinsic volumes handled before we have
\begin{align*}
	D_{x,y}^2f_j(K_t) = \1\cbras*{x,y \in K \setminus K(v \geq \varepsilon_t)}D_{x,y}^2 V_j(K_t).
\end{align*}
We show the following Lemma, to obtain a restriction on $x,y$ for the components of the $\bff$-vector similar to that derived from Lemma \ref{lem:second-order-diff-volume} for the intrinsic volumes:

\begin{lemma}\label{lem:second-oder-diff-f-vector}
	Fix a $d$-dimensional polytope $P \subset K$ contained in a convex body $K \in \cK^d$ and two points $x,y \in K \setminus P$. Let $\cV_x(P)$ and $\cV_y(P)$ denote the visibility regions of $x$ and $y$ with respect to $P$ be defined as in Lemma \ref{lem:second-order-diff-volume}.
	If $\cV_x(P) \cap \cV_y(P) =\emptyset$, then
	\begin{align*}
		D_{x,y}^2f_j(P) = 0,
	\end{align*}
	for all $j \in \cbras*{0, \ldots, d-1}$.
\end{lemma}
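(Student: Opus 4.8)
The plan is to imitate the proof of Lemma~\ref{lem:second-order-diff-volume}, reducing the vanishing of $D_{x,y}^2 f_j(P)$ to an additivity statement for the $j$-face numbers under the two ``caps'' glued onto $P$.

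First I would collect the set-theoretic input. From the proof of Lemma~\ref{lem:second-order-diff-volume} we already have $P^x\cap P^y=P$ and $P^{xy}=P^x\cup P^y$. In addition a short argument gives $P^x\setminus P\subseteq\cV_x(P)$ and $P^y\setminus P\subseteq\cV_y(P)$: if $w\in P^x\setminus P$, write $w$ on the segment from $x$ to a point of $P$; the ray from $x$ through $w$ first meets $P$ in a point $v\in\partial P$ with $w$ strictly between $x$ and $v$, so $[w,x]\subseteq[x,v]\setminus\{v\}$ misses $P$, i.e.\ $w\in\cV_x(P)$. Hence the hypothesis $\cV_x(P)\cap\cV_y(P)=\emptyset$ forces the two caps $P^x\setminus P$ and $P^y\setminus P$ to be disjoint and to lie over disjoint parts of $\partial P$. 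The same type of argument also yields $\cV_y(P^x)=\cV_y(P)$ (and symmetrically $\cV_x(P^y)=\cV_x(P)$): the inclusion $\subseteq$ is immediate since $P\subseteq P^x$, and for $\supseteq$ one notes that for $z\in\cV_y(P)$ neither $z$ nor any point of $[z,y]$ can lie in $P^x\setminus P\subseteq\cV_x(P)$, since such a point would also lie in $\cV_y(P)$, contradicting the disjointness; hence $[z,y]\cap P^x=[z,y]\cap P=\emptyset$ and $z\notin P^x$, so $z\in\cV_y(P^x)$.

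Next I would rewrite
\[
 D_{x,y}^2f_j(P)=\bigl(f_j(P^{xy})-f_j(P^y)\bigr)-\bigl(f_j(P^x)-f_j(P)\bigr)
\]
and show that the two parentheses are equal, i.e.\ that adding $x$ to $P^y$ changes the number of $j$-faces by exactly the same amount as adding $x$ to $P$. The combinatorial route is to classify the $j$-faces of $P^{xy}$ by writing each one as $\frF=H\cap P^{xy}=(H\cap P^x)\cup(H\cap P^y)$ for a supporting hyperplane $H$. Using that the caps are disjoint and sit over disjoint parts of $\partial P$, one argues that no $j$-face of $P^{xy}$ can straddle both caps, so each $j$-face of $P^{xy}$ is a face of $P^x$ or a face of $P^y$ (and a $j$-face lying in both is a face of $P$); dually, a $j$-face of $P$ fails to survive in $P^{xy}$ precisely when it is visible from $x$ or from $y$, and, again by disjointness, these two families of destroyed faces are disjoint. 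Collecting these classifications gives the two disjoint decompositions
\[
 \cF_j(P^{xy})\setminus\cF_j(P)=\bigl(\cF_j(P^x)\setminus\cF_j(P)\bigr)\sqcup\bigl(\cF_j(P^y)\setminus\cF_j(P)\bigr)
\]
and
\[
 \cF_j(P)\setminus\cF_j(P^{xy})=\bigl(\cF_j(P)\setminus\cF_j(P^x)\bigr)\sqcup\bigl(\cF_j(P)\setminus\cF_j(P^y)\bigr),
\]
and comparing cardinalities yields $f_j(P^{xy})+f_j(P)=f_j(P^x)+f_j(P^y)$, that is $D_{x,y}^2f_j(P)=0$.

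The hard part is precisely the face-lattice bookkeeping of the previous paragraph: ruling out straddling $j$-faces of $P^{xy}$ and matching the new faces and the destroyed faces exactly. This is where the hypothesis $\cV_x(P)\cap\cV_y(P)=\emptyset$ genuinely bites, and the delicate points are the ``horizon'' faces sitting in the common boundary $\overline{\cV_x(P)}\cap\overline{\cV_y(P)}$, so the argument has to be set up carefully along the lines of the combinatorial analysis of \cite{Reitzner2005b}. A possibly cleaner alternative is to exploit $\cV_y(P^x)=\cV_y(P)$ directly: the change $D_xf_j(Q)$ of the $j$-face number under adding $x$ to a polytope $Q$ is determined by $\partial Q$ restricted to (the closure of) the visibility region of $x$ together with its horizon, and since $\partial P^x$ agrees with $\partial P$ off the cap $P^x\setminus P\subseteq\cV_x(P)$ — which is separated from the part of $\partial P$ modified by adding $y$ except on a lower-dimensional horizon set — one obtains $D_xf_j(P^y)=D_xf_j(P)$, i.e.\ $D_{x,y}^2f_j(P)=0$; here the work is in making precise and justifying this locality of $D_xf_j$, once more in the spirit of \cite{Reitzner2005b}.
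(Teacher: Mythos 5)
Your proposal is correct and, in its ``cleaner alternative'' form at the end, is essentially the paper's own proof: the paper also reduces the claim to the locality statement that the effect of the add-one-cost operator on $f_j$ is unchanged by the presence of the other point, using that the two visibility regions are disjoint. Concretely, the paper decomposes
\[
 f_j(P^{xy}) = f_j(P^x) + f_{j-1}\bigl(\link(P^{xy},y)\bigr) - \abs{\cF_j^{\Vis}(P^x,y) \setminus \link(P^{xy},y)},
\]
observes that disjointness of the visibility regions gives $\cF^{\Vis}(P^x,y) = \cF^{\Vis}(P,y)$ and $\link(P^{xy},y)=\link(P^y,y)$, and then applies the analogous decomposition for $f_j(P)$ via $P^y$; substituting makes everything cancel. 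This is exactly your ``$D_y f_j(P^x) = D_y f_j(P)$'' route (or its symmetric twin), and the link/visible-face identities are precisely the bookkeeping you correctly flag as the substantive content. Your primary route via the two disjoint decompositions of $\cF_j(P^{xy})\setminus\cF_j(P)$ and $\cF_j(P)\setminus\cF_j(P^{xy})$ is an equivalent reorganisation of the same combinatorics rather than a genuinely different argument, and as you computed, those two decompositions do imply $f_j(P^{xy})+f_j(P)=f_j(P^x)+f_j(P^y)$. One small slip: in the final paragraph you cite $\cV_y(P^x)=\cV_y(P)$ but then conclude $D_xf_j(P^y)=D_xf_j(P)$; the latter uses the symmetric identity $\cV_x(P^y)=\cV_x(P)$, which you had also established, so this is only a labelling mismatch.
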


\begin{proof}
	Denote by $P^{xy}$, $P^x$ and $P^y$ the convex hulls of $P \cup \cbras*{x,y}$, $P \cup \cbras*{x}$ resp. $P \cup \cbras*{y}$, then we can decompose the number of $j$-faces of $P^{xy}$ into the number of $j$-faces of $P^x$ and the gained and lost $j$-faces obtained from adding $y$, i.e. 
	\begin{align*}
		f_j(P^{xy}) = f_j(P^x) + f_{j-1}(\link(P^{xy},y)) - \abs*{\cF_j^{\Vis}(P^x,y) \setminus \link(P^{xy},y)}
	\end{align*}
	Since the visible regions are disjoint we have $\cF^{\Vis}(P^x,y) = \cF^{\Vis}(P,y)$ and $\link(P^{xy},y) = \link(P^y,y)$, thus
	\begin{align*}
		D_{x,y}^2f_j(P) & = f_j(P^{xy}) - f_j(P^x) - f_j(P^y) + f_j(P)\\
	 		& = f_{j-1}(\link(P^{y},y)) - \abs*{\cF_j^{\Vis}(P,y) \setminus \link(P^{y},y)} - f_j(P^y) + f_j(P).
	\end{align*}
	Similar to $f_j(P^{xy})$ we can decompose $f_j(P)$ by counting the $j$-faces in $P^y$ and subtracting the difference that arises from the addition of $y$ to $P$:
	\begin{align*}
		f_j(P) = f_j(P^y) - f_{j-1}(\link(P^y, y)) + \abs*{\cF_j^{\Vis}(P,y) \setminus \link(P^y,y)},
	\end{align*}
	yielding 
	\begin{align*}
		D_{x,y}^2f_j(P) & = f_{j-1}(\link(P^{y},y)) - \abs*{\cF_j^{\Vis}(P,y) \setminus \link(P^y,y)} - f_j(P^y)\\
		& \quad + f_j(P^y) - f_{j-1}(\link(P^y, y)) + \abs*{\cF_j^{\Vis}(P,y) \setminus \link(P^y,y)}\\
			& = 0,
	\end{align*}
	which is the desired conclusion.
\end{proof}

Since $\Vis_x(t) \cap \Vis_y(t) = \emptyset$ implies the conditions of Lemma \ref{lem:second-order-diff-volume} for $P = K_t \supseteq K(v \geq \varepsilon_t)$ it follows that
\begin{align*}
	D_{x,y}^2f_j(K_t) = \1\cbras*{x,y \in K \setminus K(v \geq \varepsilon_t)}\1\cbras*{\Vis_x(t) \cap \Vis_y(t) \neq \emptyset} D_{x,y}^2f_j(K_t),
\end{align*}
and similar to $D_{x,y}^2V_j(K_t)$ we derive
\begin{align*}
	D_{x,y}^2(K_t) \ll \1\cbras*{x,y \in K \setminus K(v \geq \varepsilon_t)}\1\cbras*{\Vis_x(t) \cap \Vis_y(t) \neq \emptyset} \log(t)^{pd}.
\end{align*}

Having disposed of this preliminary steps, we can now summarize the results in the following Lemma bounding the order of magnitude of the $p$-th moment of the difference operator of the $\bff$-vector components $f_j(K_t)$.

\begin{lemma}\label{lem:diff-moments-f}
	Let $p \in \cbras*{1,\ldots,8}$, $j \in \cbras*{0,\ldots,d-1}$ and $x,y \in K$, then
	\begin{align*}
		\Eabs*{(D_xf_j(K_t))^p} & \ll \1\cbras*{x \in K \setminus K(v \geq \varepsilon_t)} \log(t)^{dp},\\
		\Eabs*{(D_{x,y}^2f_j(K_t))^p} & \ll \1\cbras*{x,y \in K \setminus K(v \geq \varepsilon_t)}\1\cbras*{\Vis_x(t) \cap \Vis_y(t) \neq \emptyset} \log(t)^{dp}.
	\end{align*}
\end{lemma}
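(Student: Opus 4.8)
The plan is to combine the two difference-operator bounds for the $\bff$-vector components that were established in the paragraphs immediately preceding the statement, exactly mimicking the proof of Lemma \ref{lem:diff-moments-volume}. Recall that on the event $A(\varepsilon_t,t)$ we showed
\begin{align*}
	\Eabs*{D_xf_j(K_t)\1_{A(\varepsilon_t,t)}}^p \ll \1\cbras*{x \in K \setminus K(v \geq \varepsilon_t)}\log(t)^{dp}
\end{align*}
(via the containment $\cF_i^{\Vis}(K_t,x) \subseteq \Vis_z(t)$, the binomial bound $\abs*{D_xf_j(K_t)} \leq (\eta(\Vis_z(t))+1)^{j+1}$, the Poisson moment formula in terms of Stirling numbers, and $\mu(\Vis_z(t)) \ll \log(t)$), and the analogous bound for the second order operator using Lemma \ref{lem:second-oder-diff-f-vector} to introduce the indicator $\1\cbras*{\Vis_x(t) \cap \Vis_y(t) \neq \emptyset}$. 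So it only remains to control the contribution of the complementary event $A^c(\varepsilon_t,t)$ and then add the two pieces.

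First I would split the expectation as
\begin{align*}
	\Eabs*{(D_xf_j(K_t))^p} = \Eabs*{D_xf_j(K_t)\1_{A(\varepsilon_t,t)}}^p + \Eabs*{D_xf_j(K_t)\1_{A^c(\varepsilon_t,t)}}^p,
\end{align*}
with the analogous splitting for $D_{x,y}^2 f_j(K_t)$. The first summand is already bounded by $\1\cbras*{x \in K \setminus K(v \geq \varepsilon_t)}\log(t)^{dp}$ (resp. with the extra visibility indicator) times $\Prob*{A(\varepsilon_t,t)} \leq 1$. For the second summand I would use the deterministic bound $\abs*{D_xf_j(K_t)} \leq (\eta(K)+1)^{j+1} \leq (\eta(K)+1)^d$ valid on all of $\rN_\sigma$ (and the factor $2$-adjusted version $\abs*{D_{x,y}^2 f_j(K_t)} \leq 2(\eta(K)+1)^d$ for the second order operator), then apply Cauchy--Schwarz to separate the indicator from the moments of $\eta(K)$:
\begin{align*}
	\Eabs*{D_xf_j(K_t)\1_{A^c(\varepsilon_t,t)}}^p \leq \rbras*{\Prob*{A^c(\varepsilon_t,t)} \, \E*{(\eta(K)+1)^{2pd}}}^{\frac{1}{2}} \ll t^{-\frac{\beta}{2}} t^{pd} \ll 1,
\end{align*}
exactly as in the display at the end of the first-order paragraph, using $\Prob*{A^c(\varepsilon_t,t)} < \bC t^{-\beta}$ from Lemma \ref{lem:floating-body-contained} with $\beta = 16d+1 \geq 2pd$ (here $p \leq 8$, $j+1 \leq d$, so $pd \leq 8d$ and $2pd \leq 16d < \beta$) and the fact that $\eta(K)$ is Poisson distributed, so that all its moments are finite constants independent of $t$. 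Finally, since $t^{-\beta/2}t^{pd} \ll 1 \ll \log(t)^{dp}$, both summands are dominated by $\log(t)^{dp}$ (times the appropriate indicator functions), which yields both claimed bounds.

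I do not expect a genuine obstacle here: the statement is essentially a bookkeeping consolidation of the estimates proved in the running text, with the only subtlety being the need to carry the indicator $\1\cbras*{x \in K \setminus K(v \geq \varepsilon_t)}$ (resp. the product with $\1\cbras*{\Vis_x(t) \cap \Vis_y(t) \neq \emptyset}$) through the $A^c$ term as well. This is handled cleanly because on $A^c(\varepsilon_t,t)$ we still have $D_xf_j(K_t) = 0$ whenever $x \in K(v \geq \varepsilon_t) \subseteq K_t$ — adding a point already in the polytope changes nothing — so the indicator persists; and likewise $D_{x,y}^2 f_j(K_t)$ vanishes unless both points lie in the wet part and the visibility regions meet, the latter by Lemma \ref{lem:second-oder-diff-f-vector}. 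The only mild care needed is that the constant $\bC$ absorbed in $\ll$ may depend on $K$ and $d$ (through $\E*{(\eta(K)+1)^{2pd}}$ and the Stirling-number sums) but not on $t$, which is consistent with the convention fixed at the start of Section 4.
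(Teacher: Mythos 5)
Your proof follows the same route as the paper's: split on $A(\varepsilon_t,t)$ versus its complement, invoke the preceding estimates on $A$, bound the $A^c$-contribution via a deterministic estimate plus Cauchy--Schwarz and the polynomial decay of $\Prob*{A^c(\varepsilon_t,t)}$ from Lemma~\ref{lem:floating-body-contained}. The paper's proof of Lemma~\ref{lem:diff-moments-f} simply points to the proof of Lemma~\ref{lem:diff-moments-volume}, which is precisely this decomposition.

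One detail in your closing paragraph is incorrect, though. You argue that the indicator $\1\cbras*{x \in K \setminus K(v\geq\varepsilon_t)}$ survives the $A^c$-term because, even on $A^c$, ``$D_x f_j(K_t) = 0$ whenever $x \in K(v\geq\varepsilon_t) \subseteq K_t$.'' But the inclusion $K(v\geq\varepsilon_t)\subseteq K_t$ is exactly what fails on $A^c(\varepsilon_t,t)$; a point $x$ in the floating body need not lie in $K_t$, so $D_xf_j(K_t)$ need not vanish there. The paper's own treatment does not claim this: in the proof of Lemma~\ref{lem:diff-moments-volume} the $A^c$-contribution is bounded as $V_j(K)^p\Prob*{A^c(\varepsilon_t,t)}\ll t^{-\beta}$ \emph{without} the indicator, so the stated $\ll$-bound is best read as carrying an implicit additive $t^{-\beta}$ error, which is harmless after integrating against $\mu^3 = t^3\Lambda_d^3$ because $\beta = 16d+1$ is large. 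The same applies to the second-order bound and to Lemma~\ref{lem:second-oder-diff-f-vector}, whose hypotheses (in particular $K(v\geq\varepsilon_t)\subseteq K_t$) are likewise not guaranteed on $A^c$. So you should drop the claim that the indicators persist on $A^c$ and instead observe, as the paper does, that the $A^c$-term is uniformly $\ll 1$ (indeed $\ll t^{-\beta/2+pd}$) and therefore negligible in the applications that follow.
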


\begin{proof}
	The proof is similar to the one of Lemma \ref{lem:diff-moments-volume}.
\end{proof}

We are left with the task of applying our estimations on the bound $\gamma_1$, $\gamma_2$ and $\gamma_3$ given by Theorem \ref{thm:bound-multi}.
Since we consider the multivariate functional given by \eqref{eq:multivariate-functional} we have to distinguish the following three cases depending on the combination of functionals $F_i$ and $F_j$ using the corresponding bounds for the variance given by \eqref{eq:intrinsic:variance} for the intrinsic volumes and by
\begin{align}\label{eq:fvec:variance}
	t^{1-\frac{2}{d+1}} \ll \V*{f_k(K_t)} \ll t^{1-\frac{2}{d+1}},
\end{align}
for the components of the $\bff$-vector, see \cite{Reitzner2005a}.
We denote by $\gamma_1(i,j)$, $\gamma_2(i,j)$ resp. $\gamma_3(i)$ the integral in $\gamma_1$, $\gamma_2$ resp. $\gamma_3$, then it follows from Lemma \ref{lem:diff-moments-volume}, \ref{lem:diff-moments-f} and the estimations on the domain of integration \eqref{eq:missed-volume-bound} and \eqref{eq:visible-intersection-bound}, that
\begin{align*}
	\gamma_1(i,j), \gamma_2(i,j) \ll t^{-1+\frac{2}{d+1}} \times 
		\begin{cases}
			 \log(t)^{6+\frac{2}{d+1}}, & i,j \in \cbras*{1,\ldots,d},\\
			\log(t)^{4+2d+\frac{2}{d+1}}, & i \in \cbras*{1,\ldots,d}, j \in \cbras*{d+1, \ldots, 2d},\\
			\log(t)^{2+4d+\frac{2}{d+1}}, & i,j \in \cbras*{d+1, \ldots, 2d}.
		\end{cases}
\end{align*}
and
\begin{align*}
	\gamma_3(i) \ll t^{-\frac{1}{2}+\frac{1}{d+1}} \times 
		\begin{cases}
				\log(t)^{3+\frac{2}{d+1}}, & i \in \cbras*{1, \ldots, d},\\
				\log(t)^{3d + \frac{2}{d+1}}, & i \in \cbras*{d+1, \ldots, 2d}.
		\end{cases}
\end{align*}
It can easily be seen that the speed of convergence is dominated by the case $i,j \in \cbras*{d+1, \ldots, 2d}$ thus we can rewrite the bound in Theorem \ref{thm:bound-multi} using $\sigma_{ij}(t) = \Cov*{F_i}{F_j}$ to
\begin{align*}
	& \dist_3(F,N_\Sigma(t)) \ll d \sum\limits_{i,j=1}^{2d} \abs*{\sigma_{ij}(t) - \Cov*{F_i}{F_j}}\\
	& + 2d \cdot t^{-\frac{1}{2}+\frac{1}{d+1}}\log(t)^{1+2d+\frac{1}{d+1}} + d \cdot t^{-\frac{1}{2}+\frac{1}{d+1}}\log(t)^{1+2d+\frac{1}{d+1}} + d^2 \cdot t^{-\frac{1}{2}+\frac{1}{d+1}}\log(t)^{3d+\frac{2}{d+1}}\\
	& \ll t^{-\frac{1}{2}+\frac{1}{d+1}}\log(t)^{3d+\frac{2}{d+1}},
\end{align*}
which completes the proof.

\subsection{Proof of Theorem \ref{thm:oracle}: oracle functional}

Recall that the oracle estimator is given by $\hat{\vartheta}_{oracle}(K_t) = V_d(K_t) + t^{-1} f_0(K_t)$, and its variance asymptotics is given by \eqref{eq:oracle-variance},
\begin{align*}
	\V*{\hat{\vartheta}_{oracle}(K_t)} = \gamma_d \Omega(K)(1+o(1))t^{-1-\frac{2}{d+1}} = \Theta\rbras*{t^{-1-\frac{2}{d+1}}},
\end{align*}
for $t \rightarrow \infty$, where the constant $\gamma_d$ only depends on the dimension and is known explicitly and $\Omega(K)$ denotes the affine surface area of $K$. 
Rescaling of $\vartheta(K_t)$ yields
\begin{align*}
	\frac{\hat{\vartheta}(K_t)}{\sqrt{\V{\hat{\vartheta}(K_t)}}} = \Theta\rbras*{t^{\frac{1}{2}+\frac{1}{d+1}} V_d(K_t) + t^{-\frac{1}{2}+\frac{1}{d+1}}f_0(K_t)},
\end{align*}
where the scaling $t^{\frac{1}{2}+\frac{1}{d+1}}$ resp. $t^{-\frac{1}{2}+\frac{1}{d+1}}$ corresponds with the asymptotic variance of $V_d(K_t)$ resp. $f_0(K_t)$, see \eqref{eq:intrinsic:variance} and \eqref{eq:fvec:variance}. 
Therefore we can use the previous results to deduce bounds on the moments of the first and second order difference operator of the standardized oracle estimator $\tilde{\vartheta}(K_t)$.

\begin{lemma}
	Let $p \in \cbras*{1,\ldots, 4}$ and $x,y \in K$, then
	\begin{align*}
		\Eabs{(D_x\tilde{\vartheta}(K_t))^p} & \ll \1\cbras*{x \in K \setminus K(v \geq \varepsilon_t)} t^{-\frac{p}{2}+\frac{p}{d+1}} \log(t)^{dp},\\
		\Eabs{(D_{x,y}^2\tilde{\vartheta}(K_t))^p} & \ll \1\cbras*{x,y \in K \setminus K(v \geq \varepsilon_t)} \1\cbras*{\Vis_x(t) \cap \Vis_y(t) \neq \emptyset} t^{-\frac{p}{2} + \frac{p}{d+1}} \log(t)^{dp}.
	\end{align*}
\end{lemma}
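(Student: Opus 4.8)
The plan is to reduce the claim to the first- and second-order moment bounds for $V_d(K_t)$ and $f_0(K_t)$ already obtained in Lemma~\ref{lem:diff-moments-volume} and Lemma~\ref{lem:diff-moments-f}, exploiting the linearity of the difference operators. Since $D_x$ and $D_{x,y}^2$ annihilate deterministic constants, the centering in $\tilde{\vartheta}(K_t) = \bigl(\hat{\vartheta}(K_t) - \E*{\hat{\vartheta}(K_t)}\bigr)\V*{\hat{\vartheta}(K_t)}^{-1/2}$ drops out, and from $\hat{\vartheta}(K_t) = V_d(K_t) + t^{-1} f_0(K_t)$ together with the variance asymptotics \eqref{eq:oracle-variance} — which gives $\V*{\hat{\vartheta}(K_t)}^{-1/2} = \Theta\rbras*{t^{\frac{1}{2}+\frac{1}{d+1}}}$, the two-sided bound being guaranteed by $\gamma_d\,\Omega(K) > 0$ for $K \in \cK^d_{sm}$ — one gets
\begin{align*}
	D_x\tilde{\vartheta}(K_t) = \V*{\hat{\vartheta}(K_t)}^{-1/2}\rbras*{D_xV_d(K_t) + t^{-1}D_xf_0(K_t)}
\end{align*}
together with the analogous identity for $D_{x,y}^2\tilde{\vartheta}(K_t)$.

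First I would apply Minkowski's inequality in $L^p$ to split each difference operator into a $V_d$-part and an $f_0$-part. For the $V_d$-part, Lemma~\ref{lem:diff-moments-volume} with $j = d$ gives $\Eabs*{(D_xV_d(K_t))^p}^{1/p} \ll \1\cbras*{x \in K \setminus K(v \geq \varepsilon_t)}\tfrac{\log(t)}{t}$, so after multiplying by $\V*{\hat{\vartheta}(K_t)}^{-1/2} \ll \bC\,t^{\frac{1}{2}+\frac{1}{d+1}}$ this term is of order $\1\cbras*{x \in K \setminus K(v \geq \varepsilon_t)}\,t^{-\frac{1}{2}+\frac{1}{d+1}}\log(t)$. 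For the $f_0$-part, Lemma~\ref{lem:diff-moments-f} with $j = 0$ gives $\Eabs*{(D_xf_0(K_t))^p}^{1/p} \ll \1\cbras*{x \in K \setminus K(v \geq \varepsilon_t)}\log(t)^{d}$, so after multiplying by $t^{-1}\V*{\hat{\vartheta}(K_t)}^{-1/2} \ll \bC\,t^{-\frac{1}{2}+\frac{1}{d+1}}$ this term is of order $\1\cbras*{x \in K \setminus K(v \geq \varepsilon_t)}\,t^{-\frac{1}{2}+\frac{1}{d+1}}\log(t)^{d}$. Adding the two, noting that the $f_0$-part dominates because $\log(t)^{d} \gg \log(t)$ for $d \geq 1$, and raising the resulting bound on $\Eabs*{(D_x\tilde{\vartheta}(K_t))^p}^{1/p}$ to the $p$-th power yields the first estimate. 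The second-order estimate is obtained in exactly the same way, now invoking the second-order parts of Lemma~\ref{lem:diff-moments-volume} and Lemma~\ref{lem:diff-moments-f}, which additionally carry the factor $\1\cbras*{\Vis_x(t) \cap \Vis_y(t) \neq \emptyset}$; since this factor and $\1\cbras*{x,y \in K \setminus K(v \geq \varepsilon_t)}$ are idempotent, they pass through the $L^p$-computation unchanged.

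I do not expect a genuine obstacle here. The one point that needs care is that \eqref{eq:oracle-variance} is used as a true upper bound on $\V*{\hat{\vartheta}(K_t)}^{-1/2}$, which is exactly where the smoothness hypothesis enters, via positivity of the affine surface area $\Omega(K)$. Finally, the restriction $p \in \cbras*{1,\ldots,4}$ is harmless: it lies well inside the range $p \leq 8$ for which Lemma~\ref{lem:diff-moments-volume} and Lemma~\ref{lem:diff-moments-f} were stated, and it is precisely what will be needed afterwards when these moments are substituted into the integrands $\tau_1,\tau_2,\tau_3$ of Theorem~\ref{thm:bound-uni}, where the difference operators appear through fourth moments under a fourth root.
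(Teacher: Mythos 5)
Your proposal is correct, and it reaches the bound by a slightly different route than the paper. Both arguments share the same backbone: linearity of $D_x$ and $D^2_{x,y}$ kills the centering, the two-sided variance asymptotics \eqref{eq:oracle-variance} (with $\Omega(K)>0$ for $K\in\cK^d_{sm}$) turns $\V*{\hat{\vartheta}(K_t)}^{-1/2}$ into a factor $\Theta\rbras*{t^{\frac{1}{2}+\frac{1}{d+1}}}$, and everything is then reduced to Lemma \ref{lem:diff-moments-volume} (with $j=d$) and Lemma \ref{lem:diff-moments-f} (with $j=0$). Where you differ is the combination step: the paper expands $\rbras*{t^{\frac{1}{2}+\frac{1}{d+1}}D_xV_d(K_t)+t^{-\frac{1}{2}+\frac{1}{d+1}}D_xf_0(K_t)}^p$ by the binomial theorem and applies Cauchy--Schwarz to each mixed term, which requires moments of order $2j$ and $2(p-j)$ of the components — this is precisely why the paper must restrict to $p\le 4$, so that $2p\le 8$ stays within the range of the component lemmas — and then observes $j+d(p-j)\le dp$; you instead use the triangle inequality in $L^p$ (Minkowski), which needs only $p$-th moments of each component, avoids tracking the mixed logarithmic exponents altogether, and would in fact yield the lemma for all $p\in\cbras*{1,\ldots,8}$, not just $p\le 4$ (so your remark that the restriction is ``harmless'' is accurate for your route, though in the paper it is forced by the Cauchy--Schwarz step). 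Your handling of the indicator factors (idempotence under the $1/p$-th power) and the domination of the $V_d$-contribution $t^{-\frac{1}{2}+\frac{1}{d+1}}\log(t)$ by the $f_0$-contribution $t^{-\frac{1}{2}+\frac{1}{d+1}}\log(t)^d$ is sound, and the second-order case goes through verbatim, so there is no gap.
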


\begin{proof}
	Using the binominal theorem and the Cauchy–Schwarz inequality it follows directly with Lemma \ref{lem:diff-moments-volume} and Lemma \ref{lem:diff-moments-f} that
	\begin{align*}
		& \Eabs*{t^{\frac{1}{2}+\frac{1}{d+1}} D_xV_d(K_t) + t^{-\frac{1}{2}+\frac{1}{d+1}}D_xf_0(K_t)}^p\\
		& \leq \sum\limits_{j=0}^p \binom{p}{j} \rbras*{ \Eabs*{t^{\frac{1}{2}+\frac{1}{d+1}}D_xV_d(K_t)}^{2j} \Eabs*{t^{-\frac{1}{2}+\frac{1}{d+1}}D_xf_0(K_t)}^{2(p-j)} }^\frac{1}{2}\\
		& \ll \1\cbras*{x \in K \setminus K(v \geq \varepsilon_t)}\sum\limits_{j=0}^p \binom{p}{j}  \rbras*{\rbras*{t^{\frac{1}{2}+\frac{1}{d+1}} \frac{\log(t)}{t}}^{2j} \rbras*{t^{-\frac{1}{2}+\frac{1}{d+1}} \log(t)^d}^{2(p-j)}}^\frac{1}{2}\\
		& = \1\cbras*{x \in K \setminus K(v \geq \varepsilon_t)}t^{-\frac{p}{2}+\frac{p}{d+1}} \sum\limits_{j=0}^p \binom{p}{j} \log(t)^{j+d(p-j)}.
	\end{align*}
	Since $j+d(p-j) \leq dp$ the desired conclusion follows. The proof for the second order difference operator is similar.
\end{proof}

Applying these estimations to the bound $\tau_1$, $\tau_2$ and $\tau_3$ in Theorem \ref{thm:bound-uni} yields
\begin{align*}
	\tau_1 & \ll t^{-1+\frac{2}{d+1}}\log(t)^{2+4d+\frac{2}{d+1}},\\
	\tau_2 & \ll t^{-1+\frac{2}{d+1}}\log(t)^{2+4d+\frac{2}{d+1}},\\
	\tau_3 & \ll t^{-\frac{1}{2}+\frac{1}{d+1}}\log(t)^{3d+\frac{2}{d+1}},
\end{align*}
thus
\begin{align*}
	\dist_W(\tilde{\vartheta}(K_t),Z) \leq 2 \sqrt{\tau_1} + \sqrt{\tau_2} + \tau_3 \ll t^{-\frac{1}{2}+\frac{1}{d+1}}\log(t)^{3d+\frac{2}{d+1}},
\end{align*}
for $Z \distribute \cN(0,1)$, completing the proof of Theorem \ref{thm:oracle}.

\bigskip

\textbf{Acknowledgments.} The author wishes to express his thanks to Matthias Schulte for helpful suggestions on the multivariate limit theorem and to Martina Juhnke-Kubitzke for several helpful comments concerning Lemma \ref{lem:second-order-diff-volume} and \ref{lem:second-oder-diff-f-vector}.

\printbibliography

\listoffixmes

\end{document}